\newcommand{\vv}{{\mathbb V}}
\newcommand{\con}{\operatorname{con}}
\newcommand{\mul}{\operatorname{mul}}
\newcommand{\simp}{\operatorname{sim}}
\newcommand{\ini}{\operatorname{ini}}
\newcommand{\var}{\operatorname{var}}
\newtheorem{theorem}{Theorem}[section]
\newtheorem{ex}[theorem]{Example}
\newtheorem{cor}[theorem]{Corollary}
\newtheorem*{sufcon}{Sufficient Condition}
\newtheorem{fact}[theorem]{Fact}
\newtheorem{lemma}[theorem]{Lemma}
\newtheorem{prop}[theorem]{Proposition}
\newtheorem{obs}[theorem]{Observation}
\newtheorem*{claim}{Claim}
\begin{document}

\date{}

\title{Limit varieties generated by finite non-J-trivial aperiodic monoids}
\author{Olga  B. Sapir}

\maketitle

\begin{abstract}   Jackson and Lee proved that  certain six-element monoid  generates a hereditarily finitely based variety $\mathbb E^1$  whose  lattice of subvarieties  contains an infinite ascending chain.
 We identify syntactic monoids which generate  finitely generated subvarieties of $\mathbb E^1$ and show that one of these finite monoids together with certain seven-element monoid generates a  new limit  variety.

\end{abstract}

\section{Introduction}
\label{sec: intr}

A variety of algebras is called \textit{finitely based} (abbreviated to FB) if it has a finite basis of its identities, otherwise, the variety is said to be \textit{non-finitely based} (abbreviated to NFB).
A variety is called  \textit{limit} if it is NFB but all its proper subvarieties are FB.

The first two explicit examples of limit monoid varieties $\mathbb L$ and $\mathbb M$ were discovered by Jackson~\cite{Jackson-05} in 2005.
In~\cite{Lee-09}, Lee proved the uniqueness of the limit varieties $\mathbb L$ and $\mathbb M$ in the class of varieties of finitely generated aperiodic monoids with central idempotents.
In~\cite{Lee-12}, Lee generalized the result of~\cite{Lee-09} and established that  $\mathbb L$ and $\mathbb M$ are the only limit varieties within the class of varieties of aperiodic monoids with central idempotents.
In 2013, Zhang found a NBF variety of monoids that contains neither $\mathbb L$ nor $\mathbb M$~\cite{Zhang-13} and, therefore, she proved that there exists a limit variety of monoids that differs from $\mathbb L$ and $\mathbb M$. In~\cite{Zhang-Luo}, Zhang and Luo pointed out an explicit example of such variety.

The  following semigroup  was introduced by its multiplication table and shown to be FB in \cite[Section~ 19]{Lee-Zhang}.
Its presentation  was recently suggested by Edmond W. H. Lee:
\[
A=\langle a,b,c\mid a^2=a,\,b^2=b,\,ab=ca=0,\,ac=cb=c\rangle=\{a,b,c,ba,bc,0\}.
\]
If $S$ is a semigroup, then the monoid obtained by adjoining a new identity element to $S$ is denoted by $S^1$ and the variety of monoids generated by $S^1$ is denoted by $\mathbb S^1$.
If $\mathbb V$ is a monoid variety, then  $\overline{\mathbb V}$ denotes the variety \textit{dual to} $\mathbb V$, i.e., the variety consisting of monoids anti-isomorphic to monoids from $\mathbb V$.
The variety $\mathbb A^1 \vee\overline{\mathbb A^1}$ is the third example of limit variety of monoids~\cite{Zhang-Luo} mentioned in the previous paragraph. Figure~\ref{pic} below contains the lattice
of subvarieties of $\mathbb A^1$ duplicated from Figure~1 in \cite{Zhang-Luo}.

The next pair of limit varieties  $\mathbb J$ and $\mathbb {\overline J}$  was discovered by Gusev \cite{Gusev-SF}.
In~\cite{Gusev-JAA}, he proves that   $\mathbb L$, $\mathbb  M$,
$\mathbb J$ and $\mathbb {\overline J}$  are the only limit varieties of aperiodic monoids with commuting idempotents.
In~\cite{GS},  Gusev and the author present the last pair  $\mathbb K$ and $\mathbb {\overline K}$ of limit varieties generated by finite $J$-trivial monoids 
 and show that there are exactly seven limit varieties of $J$-trivial monoids.

Let $E$ be the semigroup given by presentation:
\[
E = \langle a, b, c \mid a^2 = ab = 0, ba = ca = a, b^2 = bc = b, c^2= cb = c \rangle=\{a,b,c,ac,0\}.
\]
Monoid $E^1$ was first investigated by Lee and Li \cite[Section~14]{Lee-Li}, where it was shown to be finitely based by $\{xtx \approx xtx^2 \approx x^2tx,\, xy^2x \approx x^2y^2\}$.

 Elements of a countably infinite alphabet $\mathfrak A$ are called {\em letters} and elements of the free monoid $\mathfrak A^*$  are called {\em words}. We use $1$ to denote the empty word, which is the identity element of $\mathfrak A^*$.
Words unlike letters are written in bold.

Denote:
\[{\bf u}_0 = a^2b^2,  {\bf v}_0 = b^2a^2;\]
\begin{equation} \label{unvn} {\bf u}_{k+1} = at_{k+1} {\bf u}_k,  {\bf v}_{k+1} = at_{k+1} {\bf v}_k, k=0, 2, 4, \dots;\end{equation}
\[{\bf u}_{k+1} = bt_{k+1} {\bf u}_k,  {\bf v}_{k+1} = bt_{k+1} {\bf v}_k, k=1, 3, 5, \dots.\]
 For example:
\[ {\bf u}_1= at_1a^2b^2,  {\bf v}_1 = at_1b^2a^2;\]
\[ {\bf u}_2= bt_2at_1a^2b^2,  {\bf v}_2 =bt_2 at_1b^2a^2;\]
\[ {\bf u}_3= a t_3 bt_2at_1a^2b^2,  {\bf v}_3 =a t_3 bt_2 at_1b^2a^2;\]
\[ \dots \hskip.4in  \dots\]
For each $n \ge 1$,  the identity $\sigma_n$ introduced in  \cite{Jackson-Lee}
is equivalent modulo $xtx \approx xtx^2 \approx x^2tx$ to $\mathbf {u}_{n} \approx \mathbf {v}_{n}$.
We use $\var \Sigma$ to denote the variety defined by a set of identities $\Sigma$.
According to Proposition 5.6 in~\cite{Jackson-Lee}, the lattice
 of subvarieties of $\mathbb E^1$ contains
an infinite ascending chain $\mathbb E^1\{\sigma_1\} \subset \mathbb E^1\{\sigma_2\} \subset \dots$, where for each $n\ge 1$,
\[\mathbb E^1 \{\sigma_n\} = \var \{xtx \approx xtx^2 \approx x^2tx,\, xy^2x \approx x^2y^2, \mathbf {u}_{n} \approx \mathbf {v}_{n}\}.\]
A copy of this lattice from Figure~4 in \cite{Jackson-Lee} is shown on Figure~\ref{pic} below.
The variety $\mathbb E^1$ is the first example of a finitely generated monoid variety whose lattice of subvarieties  is countably infinite.
The second example of a variety with this property  is contained in \cite{GLZ}.

In Sect.~\ref{sec: sc}, we present  a Sufficient Condition under  which the variety $\mathbb A^1 \vee {\mathbb E}^1\{\sigma_2\}$
  is NFB.  Using Sufficient Condition in \cite{GS} under which a monoid is FB, we show that  every proper subvariety of $\mathbb A^1 \vee {\mathbb E}^1\{\sigma_2\}$ and of $\overline{\mathbb A}^1 \vee \overline{{\mathbb E}^1\{\sigma_2\}}$ is FB.
Hence  $\mathbb A^1 \vee {\mathbb E}^1\{\sigma_2\}$ and $\overline{\mathbb A}^1 \vee \overline{{\mathbb E}^1 \{\sigma_2\}}$ are new limit varieties of monoids.

Let $A_0$ be the semigroup given by presentation:
\[A_0 = \langle  a, b \mid a^2=a, b^2=b, ab = 0 \rangle = \{a,b, ba, 0\}.\]
The monoid $A_0^1$ was  shown to be FB in \cite{Edmunds-77}.

It turns out \cite{GS-23},  that  ${\mathbb E}^1\{\sigma_2\} \vee \overline{{\mathbb E}^1\{\sigma_2\}} \vee \mathbb A_0^1$ is also a limit variety.

The finite monoids which generate limit varieties $\mathbb L$ and $\mathbb M$ were introduced by Jackson in 2005 by using the so-called Dilworth-Perkins construction, which assigns a monoid $M(W)$ to a set of words $W$.
More precisely, $\mathbb L = M(\{abtbsa, atbsba\})$  and $\mathbb M = M(\{atbasb\})$ (see ~\cite{Jackson-05}).
In \cite{Sapir-18}, we generalized Dilworth-Perkins construction into $M_\tau(W)$ construction for monoids and $S_\tau(W)$ construction for semigroups, where $\tau$ is a congruence
on $\mathfrak A^*$ (resp. $\mathfrak A^+$ ).
Surprisingly,  each of the ten limit varieties mentioned above can be generated by  monoids of the form $M_\tau(W)$ where $W = [{\bf u}]_\tau$ is an equivalence class of a word ${\bf u} \in \{abtbsa, atbsba, atbasb, atb^2a, ab^2ta, ab\}$ and $\tau$ is some easy-to-define congruence on the free monoid (see Sect.~\ref{sec: 14}). 

Another well known construction assigns a syntactic monoid  $M_{synt}(W)$  or syntactic semigroup $S_{synt}(W)$ to a set of words $W$.
Proposition~2.1 in \cite{Jackson-03} about syntactic algebras implies that every monoid (semigroup with zero)  is equationally equivalent to a syntactic monoid (syntactic semigroup).

The usefulness and simplicity of Dilworth-Perkins construction and the universality of syntactic algebras
motivated us to find a connection between monoids of the form $M_\tau(W)$ (resp. semigroups of the form $S_\tau(W)$) and the syntactic monoids $M_{synt}(W)$ (resp. syntactic semigroups $S_{synt}(W)$). Theorem~\ref{T: SM} gives us such a connection and is used in Sect.~\ref{sec: MW} and Sect.~\ref{sec: 14}   to identify syntactic algebras which generate some $0$-simple semigroups, finitely generated subvarieties of $\mathbb E^1$, subvarieties of $\mathbb A^1$, and the ten limit varieties of monoids mentioned above. By computing syntactic algebras using the generalized Dilworth-Perkins construction
we avoid any computations of the syntactic congruence, which tend to be very cumbersome.

 Recently, Gusev et al. \cite{GLZ} found two more pairs of limit varieties of monoids. The subvariety lattices of these varieties are much more complex than of the varieties mentioned above. In particular, it is shown in \cite{GLZ} that one of these varieties contains infinitely many infinite ascending chains of subvarieties. It is the first example of a finitely generated variety with this property.
It can be deduced from \cite{GLZ} that, in contrast to the ten limit varieties of monoids discussed above, the $M_\tau(W)$ formulas for the varieties in \cite{GLZ} are too bulky to be useful.

\section{Congruences $\tau_1$, $\gamma$ and $\beta$ on the free monoid $\mathfrak A^\ast$} \label{sec: cong}

We say that a set of words $W \subseteq \mathfrak A^*$ is {\em stable with respect to a semigroup variety $\vv$} if  ${\bf v} \in W$ whenever ${\bf u} \in W$ and $\vv$ satisfies ${\bf u} \approx {\bf v}$. Recall that a word ${\bf u} \in  \mathfrak A^*$ is an {\em isoterm} \cite{Perkins-69} for $\vv$ if the set $\{ {\bf u} \}$ is stable with respect to $\vv$.

If $\tau$ is an equivalence relation on the free monoid $\mathfrak A^*$ and $\vv$ is  a semigroup variety,
then a word ${\bf u} \in \mathfrak A^*$ is said to be  a {\em $\tau$-term} for $\vv$ if ${\bf u} \tau {\bf v}$ whenever $\vv$ satisfies ${\bf u} \approx {\bf v}$. Notice that if  $W \subseteq \mathfrak A^*$ forms a single  $\tau$-class, then  $W$ is stable with respect to $\vv$ if and only if every word in  ${\bf u} \in W$ is a $\tau$-term for $\vv$.

 Let $\tau_1$ denote the congruence on the free monoid ${\mathfrak A}^*$ induced by the relations $a=a^2$ for each $a \in{\mathfrak A}$.

A letter is called {\em simple} ({\em multiple}) in a word $\bf u$ if it occurs in $\bf u$ once (at least
twice).   The set of all  letters in $\bf u$ is denoted by $\con({\bf u})$. Notice that  $\con({\bf u}) = \simp({\bf u}) \cup \mul({\bf u})$
where $\simp({\bf u})$ is the set of all simple letters in $\bf u$ and  $\mul({\bf u})$ is the set of all multiple letters in $\bf u$.

Let $\gamma$ be the fully invariant congruence of $\var \{xy \approx yx, x^2 \approx x^3\}$.
 It is well-known and can be easily verified that this variety is generated by the 3-element  monoid  $\langle a, 1 \mid a^2 = 0\rangle$ and that for every ${\bf u}, {\bf v} \in \mathfrak A^\ast$, we have
${\bf u}\mathrel{\gamma}{\bf v}$ if and only if   $\simp({\bf u}) = \simp({\bf v})$ and  $\mul({\bf u}) = \mul({\bf v})$.

Given $W \subseteq \mathfrak A^*$ we use $W^\le$ to denote the set of all subwords of words in $W$.

\begin{lemma} \cite[Corollary~3.5]{Sapir-20+} \label{L: gammasub}  Suppose that $W \subseteq \mathfrak A^*$ forms a single
 $(\tau_1 \wedge \gamma)$-class (resp. $\tau_1$-class) of  $\mathfrak A^*$.  If  $W$ is  stable with respect to a monoid variety $\vv$   then every word in $W^\le$ is a  $(\tau_1 \wedge \gamma)$-term (resp. $\tau_1$-term) for $\vv$.
\end{lemma}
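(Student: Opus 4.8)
The plan is to deduce the result from a single, context-free inheritance step together with an induction on word length. As noted just before the lemma, the hypothesis that $W$ is stable and forms one $(\tau_1 \wedge \gamma)$-class is equivalent to saying that every word of $W$ is a $(\tau_1 \wedge \gamma)$-term for $\vv$. Since every word in $W^\le$ is obtained from some word of $W$ by successively deleting one occurrence of one letter at a time, it suffices to prove the following standalone claim, with no reference to $W$: if a word ${\bf w}$ is a $(\tau_1 \wedge \gamma)$-term for $\vv$ and ${\bf w}''$ is obtained from ${\bf w}$ by deleting a single occurrence of a single letter, then ${\bf w}''$ is again a $(\tau_1 \wedge \gamma)$-term for $\vv$. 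Applying this claim repeatedly along a chain of elementary deletions from a word of $W$ down to an arbitrary element of $W^\le$ then yields the lemma.

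For the claim, write ${\bf w} = {\bf p}\,x\,{\bf q}$, where the displayed $x$ is the occurrence to be removed, so that ${\bf w}'' = {\bf p}{\bf q}$; assume $\vv$ satisfies ${\bf w}'' \approx {\bf s}$, and we must show ${\bf w}'' \,(\tau_1 \wedge \gamma)\, {\bf s}$, that is, simultaneously ${\bf w}'' \gamma {\bf s}$ and ${\bf w}'' \tau_1 {\bf s}$. I would first normalize the right-hand side: using that ${\bf w}$ is in particular a $\gamma$-term (so $\vv$ cannot create or destroy letters, nor alter a letter's simple/multiple status, near ${\bf w}$) together with the substitutions sending unwanted letters of ${\bf s}$ to the empty word $1$, one reduces to the case $\con({\bf s}) \subseteq \con({\bf w}'')$ and records that $\simp({\bf w}'') = \simp({\bf s})$ and $\mul({\bf w}'') = \mul({\bf s})$; by the description of $\gamma$ recalled above, this already secures ${\bf w}'' \gamma {\bf s}$.

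The heart of the argument is to transfer the identity ${\bf w}'' \approx {\bf s}$ up to an identity involving the longer word ${\bf w}$, to which the term property of ${\bf w}$ applies. I would reinstate the occurrence $x$ by means of a substitution and a surrounding context so as to manufacture from $\vv \models {\bf p}{\bf q} \approx {\bf s}$ an identity $\vv \models {\bf w} \approx {\bf r}$; since ${\bf w}$ is a $(\tau_1 \wedge \gamma)$-term, this gives ${\bf w} \,(\tau_1 \wedge \gamma)\, {\bf r}$. Because $\tau_1 \wedge \gamma$ is a fully invariant congruence, applying the substitution $x \mapsto 1$ to both sides preserves the relation, and a short computation recovers ${\bf w}'' \,(\tau_1 \wedge \gamma)\, {\bf s}$; comparing the $\tau_1$-normal forms (the words with no two equal adjacent letters) on the two sides supplies the remaining ${\bf w}'' \tau_1 {\bf s}$, completing the claim.

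The main obstacle is precisely this reinsertion step. A letter-substitution acts uniformly on all occurrences of a letter, whereas an elementary deletion removes only a single occurrence; consequently, when the deleted letter $x$ (or a letter adjacent to it) occurs elsewhere in ${\bf w}$, reinserting exactly one copy of $x$ without disturbing the simple/multiple classification or the $\tau_1$-normal form at other positions is delicate. I expect to resolve this by introducing auxiliary fresh letters to mark the deletion site before reinserting, exploiting the full invariance of $\tau_1 \wedge \gamma$ to pass these markers back and forth, and by treating separately the cases in which $x$ is simple in ${\bf w}$ and in which $x$ is multiple. The $\tau_1$-coordinate is the part most sensitive to these markers and must be tracked carefully alongside the comparatively robust $\gamma$-coordinate throughout.
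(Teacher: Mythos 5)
This lemma is quoted from \cite[Corollary~3.5]{Sapir-20+} and is not proved in the present paper, so there is no in-paper proof to compare against; judged on its own terms, your proposal has a genuine gap at its central step. You reduce everything to the claim that deleting a single occurrence of a single letter from a $(\tau_1\wedge\gamma)$-term again yields a $(\tau_1\wedge\gamma)$-term, and you propose to prove this by ``reinserting'' the deleted occurrence so as to lift an identity ${\bf p}{\bf q}\approx{\bf s}$ of $\vv$ to an identity of the longer word ${\bf p}x{\bf q}$. But equational logic gives you only two ways to manufacture new identities: substitution (which acts on \emph{all} occurrences of a letter uniformly) and multiplication by a context (which adds material only at the two ends). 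Inserting one occurrence in the middle of a word is neither, and your fallback --- fresh marker letters passed ``back and forth'' --- cannot be carried out, because the markers do not occur in the identity ${\bf p}{\bf q}\approx{\bf s}$ you start from. You yourself flag this as ``the main obstacle'' and then defer it; that deferred step is the entire content of the lemma. A secondary but real error: $\tau_1\wedge\gamma$ is \emph{not} fully invariant ($\tau_1$ is generated only by the letter relations $a=a^2$, so e.g.\ $ab$ and $abab$ are $\tau_1$-distinct although the second is the image of the first under $x\mapsto x$, $y\mapsto yxy$ composed suitably; more simply, the substitution $x\mapsto xy$ does not preserve $\tau_1$-classes), so the invariance you invoke for the final cancellation is unavailable except for the special substitutions sending letters to letters or to $1$.

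The reduction itself also points you away from the workable argument. Here $W^\le$ is the set of \emph{factors} (the quasi-order $\le_\tau$ in Section~\ref{sec: MW} is the factor order), so the natural route is: write ${\bf u}={\bf p}{\bf v}{\bf s}\in W$, take an identity ${\bf v}\approx{\bf v}'$ of $\vv$, multiply by the context to get ${\bf p}{\bf v}{\bf s}\approx{\bf p}{\bf v}'{\bf s}$, use stability of the whole class $W$ to conclude ${\bf p}{\bf v}{\bf s}\,(\tau_1\wedge\gamma)\,{\bf p}{\bf v}'{\bf s}$, and only then fight to cancel the context. That cancellation is exactly where the congruence-specific work lives, and it genuinely fails for naive reasons: for instance $x\cdot xyx$ and $x\cdot yx^2$ are $(\tau_1\wedge\gamma)$-related while $xyx$ and $yx^2$ are not, so one must first pin down $\con$, $\simp$ and $\mul$ of ${\bf v}'$ by substituting letters of ${\bf v}\approx{\bf v}'$ to $1$ or to powers and showing that any discrepancy produces an identity (such as $x\approx x^2$) destabilizing $W$. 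The paper's own remark after Example~\ref{E: LZ} --- that $[a^2b^2]_\beta$ is stable for $\mathbb L_2^1\vee\mathbb M(x)$ while the subword $a^2b$ is not a $\beta$-term --- shows that deletion-inheritance statements of the kind you are iterating are false for nearby congruences, so your standalone claim cannot be accepted without the detailed, $\tau_1\wedge\gamma$-specific verification that your write-up postpones.
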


 A \textit{block} of a word $\mathbf u$ is a maximal subword of $\mathbf u$ that does not contain any letters simple in $\mathbf u$.

 We use $_{i{\bf u}}x$ to refer to the $i$th from the left occurrence of $x$ in a word ${\bf u}$.
We use $_{\ell {\bf u}}x$ to refer to the last occurrence of $x$ in ${\bf u}$.  If $x$ is simple in $\bf u$ then we use $_{{\bf u}}x$ to
denote the only occurrence of $x$ in $\bf u$.
If  the $i$th occurrence of $x$ precedes  the $j$th occurrence of $y$ in $\bf u$, we write $({_{i{\bf u}}x}) <_{\bf u} ({_{j{\bf u}}y})$.

Let $\beta$ be the fully invariant congruence  of  $\mathbb E^1 =  \var \{xtx \approx xtx^2 \approx x^2tx, xy^2x \approx x^2 y^2\}$. It follows from \cite[Section 14.3]{Lee-Li} that for every ${\bf u}, {\bf v} \in \mathfrak A^\ast$, we have
${\bf u}\mathrel{\beta}{\bf v}$ if and only if

\begin{itemize}

\item (i) the identity ${\bf u} \approx {\bf v}$ is of the form:

\[ \mathbf a_0 \prod_{i=1}^m (t_i\mathbf a_i) \approx \mathbf b_0 \prod_{i=1}^m (t_i\mathbf b_i),\]
where $\simp(\mathbf u)=\simp(\mathbf v) = \{t_1, \dots, t_m\}$ for some $m \ge 0$ and \\
$\mul(\mathbf u) = \con(\mathbf a_0  \dots \mathbf a_m)  = \con(\mathbf b_0  \dots \mathbf b_m) =\mul(\mathbf v)$;

\item (ii) for each $i=0,1,\dots, m$ we have $\con({\bf a}_i) = \con({\bf b}_i)$;

\item (iii) for each  $i=0,1,\dots, m$ and for each $x \ne y \in \con({\bf a}_i) = \con({\bf b}_i)$, we have
$(_{1{{\bf a}_i}} x) <_{{\bf a}_i} {(_{1{{\bf a}_i}}y)} \Leftrightarrow (_{1{{\bf b}_i}} x) <_{{\bf b}_i} {(_{1{{\bf b}_i}} y)}$.

\end{itemize}

For example, $(x^2y^3xtyx^2)\mathrel{\beta}(xy ty^3xy)$.
 Let  $\overline{\beta}$ denote the congruence  dual to $\beta$.
We use $\sim_{Q^1}$ to  denote the equivalence relation on $\mathfrak A^*$ given by the conditions (i) and (ii) only.
The proof of Proposition~4.3 in \cite{Lee-Li} implies that $\sim_{Q^1}$ is the
fully invariant congruence of $\mathbb Q^1$, where $Q^1$ is  the monoid obtained  by adjoining an identity element to the following  semigroup:
\[Q = \langle e, b, c \mid  e^2 = e, eb = b, ce = c, ec = be = cb = 0 \rangle.\]
The semigroup $Q$ was introduced and shown to be FB  in \cite[Section 6.5]{JA}.
Proposition~4.3 in \cite{Lee-Li} establishes that
$\mathbb Q^1 = \var \{ xtx \approx xtx^2 \approx x^2tx, x^2y^2\approx y^2x^2 \}$.

A word $\bf u$ is called {\em block-simple} if every block of $\bf u$ involves at most one letter. For example, the word $x^2t_1x^3t_2y^5t_3z^2t_4t_5y$  is block-simple. It is easy to see that if ${\bf u} \in \mathfrak A^*$ is block-simple then every word in
 $[{\bf u}]_{\tau_1 \wedge \gamma} = [{\bf u}]_\beta = [{\bf u}]_{\overline{\beta}}$  is block-simple.

We use regular expressions to describe sets of words, in particular the contents of congruence classes. For example, $[atb^2a]_\beta$ consists of all words of the form $a^nt {\bf s}$, where $n \ge 1$ and
${\bf s} \in \{a,b\}^+$ starts with $b$, contains $b$ at least twice and contains $a$. Using regular expression, we write $[atb^2a]_\beta = a^+ t ba^+b\{a,b\}^* \vee  a^+ t bb^+a\{a,b\}^*$.

\begin{fact} \label{F: xtsx} For a monoid variety $\vv$, the following are equivalent:

(i)  the set $[xtsx]_{\tau_1 \wedge \gamma}  = [xtsx]_{\beta}  =  x^+tsx^+$ is  stable with respect to  $\vv$;

(ii) $\vv$ contains $\mathbb Q^1$;

(iii) every word is $(\sim_{Q^1})$-term for $\vv$;

(iv) every block-simple  word is $(\tau_1 \wedge \gamma)$-term for $\vv$;

(v) every block-simple  word is $\beta$-term for $\vv$.

\end{fact}

\begin{proof}  The equivalence of (i) and (ii) follows immediately from Proposition~2.3 and Theorem~4.3(i) in \cite{Sapir-20+}.
Parts (ii) and (iii) are also equivalent because $\sim_{Q^1}$ is the
fully invariant congruence of $\mathbb Q^1$.
The equivalence of (iii) and (iv) are easily verified.
Parts (iv) and (v) are equivalent because a block-simple word is a $(\tau_1 \wedge \gamma)$-term for $\vv$ if and
only if it is a $\beta$-term for $\vv$.
\end{proof}

A word $\bf w$ is called {\em almost-block-simple} if for each $x \ne y \in \mul({\bf w})$
at most one  block in $\bf w$ involves both $x$ and $y$. For example, the word $xt_1yt_2x^2yzt_3zt_4yp^3t_5xp$ is almost-block-simple.
Notice that every block-simple word is almost-block-simple and every word in \eqref{unvn} is almost-block-simple.

\begin{obs} \label{O: two classes} Let $\bf w$ be an almost block-simple word where $\bf b$ is the only block which involves letters $a \ne b$.
If ${\bf b} \in  a^+b \{a,b\}^*$ then $[{\bf w}]_{\sim_{Q^1}} =[{\bf w}]_\beta \cup [{\bf w'}]_\beta$ where ${\bf w'}$ is obtained from $\bf w$ by replacing $\bf b$ by $b^2a^2$.
\end{obs}

\begin{lemma} \label{L: un} Let ${\bf u}_1,  {\bf u}_2, {\bf u}_3, \dots; {\bf v}_1,  {\bf v}_2, {\bf v}_3, \dots$ be the words defined recursively in \eqref{unvn}.
 If for some $n \ge 1$ either $[{\bf u}_n]_\beta$ or $[{\bf v}_n]_\beta$ is stable with respect to a monoid variety $\vv$ then every almost-block-simple word with
at most $n$ simple letters is $\beta$-term for $\vv$.

\end{lemma}

\begin{proof} First, we establish the following.

\begin{claim} Every block-simple  word is  $\beta$-term for $\vv$.

\end{claim}

\begin{proof}  We verify this claim only for ${\bf v}_1= at b^2a^2$, because the argument for  ${\bf u}_n$  and ${\bf v}_n$ with $n \ge 1$ is similar but bulkier.
So, we are given that the set $[atb^2a^2]_\beta = [atb^2a]_\beta$ is stable with respect to $\vv$. 
First, notice that $t$ must be an isoterm for $\vv$, because otherwise,
$\vv$ satisfies  $xty^2x \approx  x t^k y^2x$   for some $k \ge 2$, which contradicts the fact that $xty^2x$ is a $\beta$-term for $\vv$.
Since an assumption that $\vv$ is commutative also easily leads to a contradiction, the word $xy$ must be an isoterm for $\vv$.

If  the set $x^+tx^+$ is not stable with respect to $\vv$, then $\vv \models x^nt x^m \approx x^kt$ or  $\vv \models x^nt x^m \approx tx^k$  for some $n, m \ge 1$, $k \ge 2$. The first identity implies 
$x^nty^2x^m \approx x^kty^2$ and the second implies $x^nty^2x^m \approx ty^2x^k$. Since the left side of each of these identities is in $[xty^2x]_\beta$ but the right side is not, the set  $x^+tx^+$ must be stable with respect to $\vv$.

If $x^+tsx^+$ is not stable with respect to $\vv$, then $\vv \models x^n t s x^m \approx x^ktx^q sx^p$ for some $n, m, k, q, p \ge 1$.
This identity implies  $x^n t y^2 x^m \approx x^ktx^q y^2x^p$, which contradicts the fact that $x^nty^2x^m$ is a $\beta$-term for $\vv$.
We conclude that the set $x^+tsx^+$ is stable with respect to $\vv$. Consequently, 
every block-simple  word is  a $\beta$-term for $\vv$ by Fact~\ref{F: xtsx}.\end{proof}

Next we verify the following.

\begin{claim} For each $i=0, \dots, n$ both  $[{\bf u}_i]_\beta$ and $[{\bf v}_i]_\beta$ are stable with respect to  $\vv$.

\end{claim}

 \begin{proof} Since  every block-simple  word is  $\beta$-term for $\vv$,  Fact~\ref{F: xtsx}(iii) implies that 
for each $i=0, \dots, n$, the set
\[[{\bf u}_i]_{\sim_{Q^1}} = [{\bf v}_i]_{\sim_{Q^1}} \stackrel {Observation~\ref{O: two classes}}{=}
[{\bf u}_i]_\beta \cup [{\bf v}_i]_\beta\]
is stable with respect to  $\vv$.  Since $[{\bf u}_n]_\beta$ or $[{\bf v}_n]_\beta$ is stable with respect to  $\vv$,
both $[{\bf u}_n]_\beta$ and  $[{\bf v}_n]_\beta$ are stable with respect to  $\vv$.  If
either  $[{\bf u}_{n-1}]_\beta$ or  $[{\bf v}_{n-1}]_\beta$ is not stable with respect to  $\vv$, then
$\vv \models {\bf u} \approx {\bf v}$ such that ${\bf u} \in [{\bf u}_{n-1}]_\beta$ but ${\bf v} \in [{\bf v}_{n-1}]_\beta$.
This identity implies $at_n{\bf u} \approx at_n{\bf v}$, where  $at_n{\bf u} \in [{\bf u}_{n}]_\beta$ but $at_n{\bf v} \in [{\bf v}_{n}]_\beta$.
To avoid the contradiction, both $[{\bf u}_{n-1}]_\beta$ and  $[{\bf v}_{n-1}]_\beta$ must be stable with respect to  $\vv$.
And so on, until we show that both $[{\bf u}_{0}]_\beta$ and  $[{\bf v}_{0}]_\beta$ are stable with respect to  $\vv$.\end{proof}

In view of Fact~\ref{F: xtsx} and the first claim every identity of $\vv$ holds on $\mathbb Q^1$.
Now let $\bf w$ be an almost-block-simple word with at most $n$ simple letters.
To obtain a contradiction, assume that $\bf w$ is not a $\beta$-term for $\vv$. Then $\vv$ satisfies
an identity ${\bf w} \approx {\bf w'}$ such that  ${\bf w} \sim_{Q^1} {\bf w'}$, and for some block $\bf a$ in $\bf w$ and $x \ne y \in \mul({\bf a})$ we have 
$(_{1{\bf a}} x) <_{\bf a} {(_{1{\bf a}}y)}$ but  $(_{1{\bf b}} y) <_{\bf b} {(_{1{\bf b}}x)}$, where $\bf b$ is the block in $\bf w'$ which
corresponds to the block $\bf a$ in $\bf w$.  Let $\{t_1, \dots, t_k\} \subseteq \simp({\bf w})$ be the (possibly empty) set of simple
letters which appear in $\bf w$ on the left of block $\bf a$.
Since $\bf w$ is almost-block-simple, for some $\mathfrak T \subseteq \{t_1, \dots, t_k\}$, the variety  $\vv$ satisfies ${\bf w}(x,y, \mathfrak T) \approx {\bf w'}(x,y, \mathfrak T)$ such that
modulo renaming letters ${\bf w}(x,y, \mathfrak T) \in [{\bf u}_m]_\beta$  but  ${\bf w'}(x,y, \mathfrak T) \in [{\bf v}_m]_\beta$, where $m$ 
is the number of letters in $\mathfrak T$. Since $0 \le m \le k \le n$, this contradicts the second claim. To avoid the contratiction we conclude that every almost-block-simple word must be $\beta$-term for $\vv$.
\end{proof}

\section{Sufficient condition under which a monoid is NFB} \label{sec: sc}

\begin{fact}  \cite[Fact~2.1]{Sapir-15N} \label{F: nfb} Suppose that for infinitely many $n$, a semigroup
 variety $\vv$  satisfies an identity ${\bf U}_n \approx {\bf V}_n$ in at least $n$ letters such that  ${\bf U}_n$ has some property $P_n$ but  ${\bf V}_n$ does not. Suppose that for every word $\bf U$ such that $\vv \models {\bf U} \approx {\bf U}_n$ and ${\bf U}$  has property $P_n$,
  the word $\Theta({\bf v})$ also has property $P_n$ for
every substitution
$\Theta: \mathfrak A \rightarrow \mathfrak A^+$ and every identity ${\bf u} \approx {\bf v}$ in less than, say, $n/2$ letters such that $\Theta({\bf u}) = {\bf U}$.   Then $\vv$ is NFB.

\end{fact}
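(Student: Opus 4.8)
The plan is to argue by contradiction using the standard deduction (critical-step) method. Suppose $\vv$ were finitely based, and fix a finite basis $\Sigma$ of its identities. Let $N$ be the largest number of distinct letters occurring in any single identity of $\Sigma$. Among the infinitely many admissible indices I would choose one $n$ large enough that $n/2 > N+2$; then every identity in $\Sigma$ involves at most $N$ letters, with room to spare below the threshold $n/2$ even after adjoining two extra letters. By assumption $\vv$ satisfies $\mathbf U_n \approx \mathbf V_n$, the word $\mathbf U_n$ has property $P_n$, and $\mathbf V_n$ does not.

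Since $\Sigma$ axiomatizes $\vv$ and $\vv \models \mathbf U_n \approx \mathbf V_n$, completeness of equational logic yields a deduction $\mathbf U_n = \mathbf W_0, \mathbf W_1, \dots, \mathbf W_k = \mathbf V_n$ in which each $\mathbf W_{j+1}$ is obtained from $\mathbf W_j$ by one elementary step: $\mathbf W_j = \mathbf c_1\,\xi(\mathbf s)\,\mathbf c_2$ and $\mathbf W_{j+1} = \mathbf c_1\,\xi(\mathbf t)\,\mathbf c_2$ for some context words $\mathbf c_1,\mathbf c_2$, some substitution $\xi$, and some identity $\mathbf s \approx \mathbf t$ from $\Sigma$ (used in either direction). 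Each such $\mathbf s \approx \mathbf t$ involves at most $N$ letters, and every $\mathbf W_j$ satisfies $\vv \models \mathbf W_j \approx \mathbf U_n$.

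Because $\mathbf W_0$ has $P_n$ while $\mathbf W_k$ does not, there is a least index $i$ with $\mathbf W_i$ having $P_n$ but $\mathbf W_{i+1}$ lacking it. Put $\mathbf U := \mathbf W_i$, so that $\vv \models \mathbf U \approx \mathbf U_n$ and $\mathbf U$ has $P_n$. Writing the critical step as $\mathbf U = \mathbf c_1\,\xi(\mathbf s)\,\mathbf c_2 \to \mathbf c_1\,\xi(\mathbf t)\,\mathbf c_2 = \mathbf W_{i+1}$, I would factor it through a short word: introduce two fresh letters $z_1,z_2$, set $\mathbf u := z_1\mathbf s z_2$ and $\mathbf u' := z_1\mathbf t z_2$ (omitting $z_r$ whenever the corresponding context is empty, so that images stay in $\mathfrak A^+$), and define $\Theta$ by $\Theta(z_1)=\mathbf c_1$, $\Theta(z_2)=\mathbf c_2$, and $\Theta = \xi$ on the remaining letters. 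Then $\Theta(\mathbf u)=\mathbf U$, the word $\mathbf u$ uses at most $N+2 < n/2$ letters, and $\mathbf u \approx \mathbf u'$ is an identity of $\vv$. Applying the hypothesis to $\mathbf U$, $\mathbf u$, and $\Theta$ forces the image of the other side, $\Theta(\mathbf u')=\mathbf W_{i+1}$, to retain property $P_n$, contradicting the choice of $i$. Hence $\vv$ has no finite basis and is NFB.

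The main obstacle is the bookkeeping of the factorization in the last paragraph: one must verify that collapsing the two contexts flanking the rewritten factor into the auxiliary letters $z_1,z_2$ genuinely produces a word $\mathbf u$ in fewer than $n/2$ letters with $\Theta(\mathbf u)=\mathbf U$, so that the closure hypothesis applies verbatim; this is precisely the role of the slack ``$n/2$'' (rather than $N$) in the statement, which absorbs the two extra context letters. A secondary point is that the hypothesis is to be invoked as a closure condition at the single critical step — namely that applying one short $\vv$-identity $\mathbf u \approx \mathbf u'$ to a $P_n$-word equivalent to $\mathbf U_n$ preserves $P_n$ for the image of either side — so the entire argument hinges on locating that one step where $P_n$ is lost rather than tracking the property along the whole deduction.
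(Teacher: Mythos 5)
The paper states this fact without proof, citing \cite[Fact~2.1]{Sapir-15N}; your argument is precisely the standard critical-step deduction proof underlying that reference, and it is correct — including your (correct) reading of the second hypothesis as a closure condition asserting that $\Theta(\mathbf v)$ retains $P_n$ for every $\vv$-identity $\mathbf u \approx \mathbf v$ in fewer than $n/2$ letters with $\Theta(\mathbf u)=\mathbf U$, which is the intended meaning of the statement as printed. Your choice of $n$ with $n/2 > N+2$ properly absorbs the two auxiliary context letters $z_1,z_2$, so the factorization of the critical step goes through as claimed.
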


If  $\mathbf U=\Theta(\mathbf u)$ for some endomorphism $\Theta$ of $\mathfrak A^+$ and $_{i{\bf U}}x$ is an occurrence of a letter $x$ in $\bf U$ then  $\Theta^{-1}_{\bf u}({_{i{\bf U}}x})$ denotes  an occurrence  ${_{j{\bf u}}z}$ of a letter $z$ in  $\bf u$ such that $\Theta({_{j{\bf U}}z})$ regarded as a subword of $\bf U$ contains $_{i{\bf U}}x$.

\begin{sufcon}
Let $\vv$ be a monoid variety that  satisfies the identity
\begin{equation} \label{long identity1} {\bf U}_n = x y_1^2y_2^2\cdots y^2_{n-1} y_n^2x\approx x y_1^2 x y_2^2 \cdots y^2_{n-1} x y^2_n x = {\bf V}_n\end{equation}
for any $n\ge1$. If the sets  $[ab^2ta]_{\tau_1 \wedge \gamma} = a^+bb^+ta^+$ and $[at b^2 a]_{\beta}$ are stable with respect to $\vv$ then $\vv$ is NFB.

\end{sufcon}

\begin{proof}
Consider the following property of a word ${\bf U}$ with $\con({\bf U})  = \{x, y_1, \dots,y_n\}$:

(P): There is no  $x$  in $\bf U$  between  the first occurrence of $y_1$ and the first occurrence  of $y_n$.

Notice that ${\mathbf U}_n$ satisfies property (P) but  ${\bf V}_n$ does not.

Let $\bf U$ be such that $\vv \models {\bf U}_n \approx {\bf U}$. Since $[a^2b^2]_{\tau_1 \wedge \gamma} = aa^+bb^+$ is stable with respect to $\vv$   by Lemma~\ref{L: gammasub},  we have:
\begin{equation}
\label{letters in u}
({_{1{\bf U}}x})   <_{\bf U} ({_{\ell{\bf U}}y_1})  <_{\bf U} ({_{1{\bf U}}y_2}) <_{\bf U} ({_{\ell{\bf U}}y_2}) <_{\bf U} \dots <_{\bf U} ({_{\ell{\bf U}}y_{n-1}}) <_{\bf U} ({_{1{\bf U}}y_n}) <_{\bf U} ({_{\ell{\bf U}}x}).
\end{equation}
Let ${\bf u} \approx {\bf v}$ be an identity of $\vv$ in less than $n/2$ letters and let
 $\Theta: \mathfrak A \rightarrow \mathfrak A^+$  be a substitution such that $\Theta({\bf u}) = {\bf U}$.
In view of \eqref{letters in u}, the following holds:

(*) If $\Theta(t)$ contains both $y_i$ and $y_j$ for some $1\le i <j \le n$ then letter $t$ is simple in $\bf u$.

Suppose that $\bf U$ has Property (P). Then in view of \eqref{letters in u}, every subword $\bf A$ of $\bf U$
has the following property:

(**) if $\bf A$ contains $xy_i$ then $i=1$; if  $\bf A$ contains $y_j x$ then $j=n$.

Let us verify that ${\bf V} = \Theta({\bf v})$ also has Property (P).
To obtain a contradiction, assume that there is  an occurrence of $x$ in $\bf V$ such that
\[({_{1{\bf V}}y_1}) <_{\bf V} ({_{k{\bf V}}x}) <_{\bf V} ({_{1{\bf V}}y_n}).\]
Consider two cases.

{\bf Case 1:}  There is  an occurrence of $x$ in $\bf V$ such that
\begin{equation}\label{case2}
({_{1{\bf V}}y_{n/2}}) <_{\bf V} ({_{k{\bf V}}x}) <_{\bf V} ({_{1{\bf V}}y_{n}}).
\end{equation}

Since $\bf u$ has less than $n/2$ letters, for some $t \in \con({\bf u})$ the word $\Theta(t)$ contains both $y_i$ and $y_j$ for some $1 \le i <j \le n/2$. In view of (*), the letter $t$ is simple in $\bf u$. Since $t$ is an isoterm for $\vv$ by Lemma~\ref{L: gammasub}, the letter $t$ is simple in $\bf v$ as well. $\Theta^{-1}_{\bf v}({_{k{\bf V}}x}) = {_{p{\bf v}}z}$ is an occurrence of some letter $z$ in $\bf v$ such that $\Theta(z)$ contains $x$.
Since the empty word $1$ is an isoterm for $\vv$ by Lemma~\ref{L: gammasub}, the letter $z$ occurs in $\bf u$ as well.

In view of Fact~2.6 in \cite{Sapir-15N}, $\Theta^{-1}_{\bf u}({_{1{\bf U}}y_n}) = {_{1{\bf u}}y}$ and
 $\Theta^{-1}_{\bf v}({_{1{\bf V}}y_n}) = {_{1{\bf v}}y'}$  for some $y,y' \in \con({\bf u})=\con({\bf v})$.
If $y \ne y'$ then $(_{1{\bf u}} y) <_{\bf u} {(_{1{\bf u}}y')}$ but $(_{1{\bf v}} y') <_{\bf v} {(_{1{\bf v}}y)}$.
 This is impossible, because ${\bf u}(y,y')$ is $\beta$-term for $\vv$  by by Lemma~\ref{L: un}.
 Thus $y=y'$.

If $t=z$ (resp. $y=z$) then in view of \eqref{case2},
$\Theta(t)=\Theta(z)$ (resp. $\Theta(y)=\Theta(z)$) contains either $xy_i$ for some $1 <i \le n$ or $y_j x$ for some $1 \le j <n$.
Since both $\Theta(t)$  and $\Theta(y)$ are subwords of $\bf U$, this is impossible by Property (**).
Therefore, $t \ne z$ and $y \ne z$.

Since $\bf U$ has Property (P), no $z$ occurs between $t$ and ${_{1{\bf u}}y}$ in $\bf u$. Hence  ${\bf u}(z,y,t) \in  z^*t z^*$ if $t=y$ and   ${\bf u}(z,y,t) \in  z^*t y \{y, z\}^*$ if $t \ne y$.
On the other hand, in view of \eqref{case2}, we have  $({_{{\bf v}}t}) <_{\bf v} ({_{p{\bf v}}z}) <_{\bf v} ({_{1{\bf v}}y})$. If $t=y$ this is impossible, because $t$ is simple in $\bf v$.
If $t \ne y$ then  ${\bf v}(z,y,t)
 \in  z^*t z \{y, z\}^*$.   This is  impossible, because  ${\bf u}(z,y,t)$ is $\beta$-term for $\vv$ by Lemma~\ref{L: un}.

{\bf Case 2:}  There is  an occurrence of $x$ in $\bf V$ such that
\begin{equation}\label{case1}
({_{1{\bf V}}y_1}) <_{\bf V} ({_{k{\bf V}}x}) <_{\bf V} ({_{1{\bf V}}y_{n/2}}).
\end{equation}

Since $\bf u$ has less than $n/2$ letters, for some $t \in \con({\bf u})$ the word $\Theta(t)$ contains both $y_i$ and $y_j$ for some $n/2 \le i <j \le n$. In view of (*), the letter $t$ is simple in $\bf u$. Since $t$ is an isoterm for $\vv$, $t$ is simple in $\bf v$ as well.
$\Theta^{-1}_{\bf v}({_{k{\bf V}}x}) = {_{p{\bf v}}z}$ is an occurrence of some letter $z$ in $\bf v$ such that $\Theta(z)$ contains $x$. Since the empty word $1$ is an isoterm for $\vv$ by Lemma~\ref{L: gammasub}, the letter $z$ occurs in $\bf u$ as well. 

In view of Fact~2.6 in \cite{Sapir-15N},
$\Theta^{-1}_{\bf u}({_{1{\bf U}}y_1}) = {_{1{\bf u}}y}$ and
 $\Theta^{-1}_{\bf v}({_{1{\bf V}}y_1}) = {_{1{\bf v}}y'}$  for some $y,y' \in \con({\bf u})=\con({\bf v})$.
If $y \ne y'$ then $(_{1{\bf u}} y) <_{\bf u} {(_{1{\bf u}}y')}$ but $(_{1{\bf v}} y') <_{\bf v} {(_{1{\bf v}}y)}$.
 This is impossible, because ${\bf u}(y,y')$ is $\beta$-term for $\vv$  by Lemma~\ref{L: un}.
 Thus $y=y'$.

If $t=z$ (resp. $y=z$) then in view of \eqref{case1},
$\Theta(t)=\Theta(z)$ (resp. $\Theta(y)=\Theta(z)$) contains either $xy_i$ for some $1 <i \le n$ or $y_j x$ for some $1 \le j <n$.
Since both $\Theta(t)$  and $\Theta(y)$ are subwords of $\bf U$, this is impossible by Property (**).
Therefore, $t \ne z$ and $y \ne z$.

{\bf Subcase 2.1:}  $({_{1{\bf u}}y})   <_{\bf u}  ({_{1{\bf u}}z})$.

In this case,   $({_{1{\bf v}}y})   <_{\bf v}  ({_{1{\bf v}}z})$ because  ${\bf u}(z,y)$ is $\beta$-term for $\vv$  by Lemma~\ref{L: un}. Since $\bf U$ has Property (P), we have $({_{{\bf u}}t}) <_{\bf u} ({_{1{\bf u}}z})$. But in view of \eqref{case1}, we have
 $({_{{1\bf v}}z}) <_{\bf v} ({_{{\bf v}}t})$. This is  impossible, because  ${\bf u}(z,t)$ is $\beta$-term for $\vv$  by  Lemma~\ref{L: un}.

{\bf Subcase 2.2:}  $({_{1{\bf u}}z})   <_{\bf u}  ({_{1{\bf u}}y})$.

In this case, $({_{1{\bf v}}z})   <_{\bf v}  ({_{1{\bf v}}y})$  because  ${\bf u}(z,y)$ is $\beta$-term for $\vv$  by Lemma~\ref{L: un}.
Since  $\bf U$ has Property (P), we have ${\bf u}(z,y,t) \in  z^+tz^*$ if $t=y$ and  ${\bf u}(z,y,t) \in  z^+y^+tz^*$ if $t \ne y$.
But in view of \eqref{case1}, the word
$\bf v$ contains an occurrence of $z$ between ${_{1{\bf v}}y}$ and $t$. If $t=y$ this is impossible, because $t$ is simple in $\bf v$.
If $t \ne y$, this contradicts the fact that
$[zy^2tz]_{\tau \wedge \gamma}$ and $[z^2y^2t]_{\tau \wedge \gamma}$ are stable with respect to $\vv$.

Since we obtain a contradiction in every case
we conclude that $\bf V$ must also satisfy Property (P). Therefore, the variety $\vv$ is NFB by  Fact~\ref{F: nfb}.
\end{proof}

\begin{cor} \label{C: AE} Every monoid variety $\vv$ that contains $\mathbb A^1 \vee  \mathbb E^1 \{\sigma_2\}$ and is contained in $\mathbb A^1 \vee \mathbb E^1$ is NFB.

\end{cor}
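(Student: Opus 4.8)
The plan is to apply Sufficient Condition~\ref{SC1} to $\vv$. Write $\mathbb X = \mathbb A^1 \vee \mathbb E^1\{\sigma_2\}$ and $\mathbb Y = \mathbb A^1 \vee \mathbb E^1$, so that $\mathbb X \subseteq \vv \subseteq \mathbb Y$. I must verify three things: that $\vv$ satisfies $\mathbf U_n \approx \mathbf V_n$ for every $n\ge 1$, and that the sets $a^+bb^+ta^+ = [ab^2ta]_{\tau_1 \wedge \gamma}$ and $[atb^2a]_\beta$ are stable with respect to $\vv$. Two monotonicity principles organise everything. First, an identity satisfied by a variety is satisfied by each of its subvarieties, so it is enough to establish $\mathbf U_n \approx \mathbf V_n$ in the largest variety $\mathbb Y$. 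Second, if $W$ is stable with respect to $\mathbb W$ and $\mathbb W\subseteq\vv$, then since every identity of $\vv$ is an identity of $\mathbb W$, the $\vv$-class of a word in $W$ lies inside its $\mathbb W$-class, hence inside $W$; thus stability propagates upward, and to obtain it for every $\vv$ in the interval it suffices to establish it for $\mathbb X$ (or for any subvariety of $\mathbb X$).

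For the identities I would test $\mathbf U_n \approx \mathbf V_n$ on each generator of $\mathbb Y$. In $\mathbb E^1$ this is immediate from $\beta$: both words form a single block (every letter is multiple, no letter is simple) with common content $\{x,y_1,\dots,y_n\}$ and identical order of first occurrences $x,y_1,\dots,y_n$, so $\mathbf U_n\,\beta\,\mathbf V_n$. For $\mathbb A^1$ the check is a direct computation in $A^1$: passing from $\mathbf V_n$ to $\mathbf U_n$ deletes the copies of $x$ between consecutive blocks $y_i^2$. The key observation is that a factor $\phi(y_i)^2$ is nonzero only when $\phi(y_i)\in\{1,a,b\}$, and a short case analysis on the accumulated prefix shows that inserting or deleting $\phi(x)$ between two such blocks never changes the product (in every surviving case both sides collapse to the same element, often to $0$). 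Hence $\mathbb Y\models \mathbf U_n\approx\mathbf V_n$, and a fortiori so does $\vv$.

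For the two stability conditions I would work inside $\mathbb X$, writing the join congruence as the intersection of the fully invariant congruences of its factors: a word is $\mathbb X$-equivalent to $\mathbf w$ exactly when it is equivalent to $\mathbf w$ both modulo $\mathbb E^1\{\sigma_2\}$ and modulo $\mathbb A^1$. The factor $\mathbb E^1\{\sigma_2\}$ supplies the $\beta$-type information, keeping $t$ simple and fixing the block decomposition, the content of each block, and the order of first occurrences within blocks. The factor $\mathbb A^1$ supplies the missing refinement, namely that inside a block the multiple letters occur in a segregated order; this is what the substitution $a\mapsto b,\ b\mapsto a,\ t\mapsto c$ detects, since it sends $ab^2ta$ to the nonzero element $bc$ while sending a block-internal scramble such as $ababta$ to $0$. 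Combining the two factors pins the $\mathbb X$-class of $ab^2ta$ down to $a^+bb^+ta^+$, and a parallel argument (using $[atb^2a]_\beta=[\mathbf v_1]_\beta$ together with Lemma~\ref{L: un} and Corollary~\ref{C: abt}) treats $[atb^2a]_\beta$.

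The step I expect to be the main obstacle is exactly this last stability analysis. The delicate point is that $\mathbb E^1\{\sigma_2\}$ \emph{already} satisfies the $\beta$-identity $ab^2ta\approx ababta$, so $a^+bb^+ta^+$ is \emph{not} stable with respect to $\mathbb E^1\{\sigma_2\}$ alone; the entire task of separating $(\tau_1\wedge\gamma)$-inequivalent but $\beta$-equivalent words falls on the $\mathbb A^1$ factor, and one must verify that $\mathbb A^1$ resolves \emph{every} block-internal rearrangement, not just the sample above. Dually, for $[atb^2a]_\beta$ one must check that the level-two identity $\sigma_2$ does not enlarge the level-one class $[\mathbf v_1]_\beta$; here it is essential that $\mathbb A^1$ distinguishes $\mathbf u_1$ from $\mathbf v_1$ (the substitution $a\mapsto b,\ b\mapsto a,\ t_1\mapsto 1$ sends $\mathbf u_1$ to $ba$ and $\mathbf v_1$ to $0$) even though $\mathbb A^1$ does not distinguish $\mathbf u_2$ from $\mathbf v_2$, so that the join congruence keeps separate the classes that $\sigma_2$ threatens to merge. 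Once both sets are stable with respect to $\mathbb X$, stability with respect to $\vv$ follows by monotonicity, and Sufficient Condition~\ref{SC1} yields that $\vv$ is NFB.
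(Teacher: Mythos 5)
Your proposal is correct and takes essentially the same route as the paper: verify the three hypotheses of Sufficient Condition~\ref{SC1} and apply it, checking the identity \eqref{long identity1} on each generator of $\mathbb A^1\vee\mathbb E^1$ and transferring the two stability conditions up to $\vv$ by monotonicity. The only difference is bookkeeping: the paper obtains each stability condition from a single factor by citation ($a^+bb^+ta^+$ is stable with respect to $\mathbb A^1$ by Theorem~4.3 of \cite{Sapir-20+}, and $[atb^2a]_\beta$ is stable with respect to $\mathbb E^1\{\sigma_2\}$ alone by Lemma~5.7 of \cite{Jackson-Lee}), so your join-congruence machinery is unnecessary and your claim that the $\mathbb A^1$ factor is ``essential'' for the stability of $[atb^2a]_\beta$ is inaccurate --- though harmless, since passing to the finer congruence of the join can only shrink the classes.
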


\begin{proof}The fact that 
$\mathbb A^1$  satisfies \eqref{long identity1}  is verified in \cite{Zhang-Luo}.
The variety $\mathbb E^1$ satisfies \eqref{long identity1} because ${\bf U}_n\mathrel{\beta}{\bf V}_n$ where $\beta$ is the 
fully invariant congruence of $\mathbb E^1$. 
Therefore,  $\mathbb A^1 \vee \mathbb E^1$ satisfies \eqref{long identity1}.

Theorem~4.3(iii)  in \cite{Sapir-20+} 
 implies  that   $[ab^2ta]_{\tau_1 \wedge \gamma} = a^+bb^+ta^+$ is stable with respect to $\mathbb A^1$ and can be used to recheck
that $\mathbb A^1$  satisfies \eqref{long identity1}.
The fact that  $[at b^2 a]_{\beta}$  is stable with respect to $\mathbb E \{\sigma_2\}$ is, in essence, verified in the proof
of Lemma~5.7 in  \cite{Jackson-Lee}.
 Hence $\vv$ is NFB by  the Sufficient Condition.
\end{proof}


\section{New pair of limit varieties of aperiodic monoids} \label{sec: lv}

Given a congruence $\tau$ on the free monoid  $\mathfrak A^*$, we use $\circ$ to denote the binary operation on the quotient monoid  $\mathfrak A^*/\tau$. We refer to the elements of $\mathfrak A^*/\tau$ as $\tau$-classes.
The subword relation $\le$  on $\mathfrak A^*$ can be naturally extended to $\tau$-classes as follows.
Given two $\tau$-classes ${\mathtt u}, {\mathtt v} \in \mathfrak A^*/\tau$  we write ${\mathtt v} \le_\tau {\mathtt u}$ if ${\mathtt u} = {\mathtt p}\circ_\tau {\mathtt v}\circ_\tau {\mathtt s}$ for some   ${\mathtt p}, {\mathtt s} \in \mathfrak A^*/\tau$.

Let $\tau$ be  a congruence  on the free monoid $\mathfrak A^*$  and  $W \subseteq \mathfrak A^*$ be a union of $\tau$-classes.
 If  $\mathfrak A^* = W^\le$ then  we define  $M_\tau(W) = M_\tau(\mathtt W) =  \mathfrak A^*/ \tau$, where $\mathtt W$ is the set of  all $\tau$-classes formed by words in $W$. If $\mathfrak A^* \setminus W^\le$ is not empty then it is a union of  $\tau$-classes containing all words which are not subwords of any word in $W$.
In this case we define $M_\tau(W) = M_\tau({\mathtt W})$ as the Rees quotient of $\mathfrak A^*/ \tau$  over the ideal $(\mathfrak A^*/ \tau) \setminus  {\mathtt W}^{\le_\tau}$, where
${\mathtt W}^{\le_\tau}$  is the closure of $\mathtt W$ in quasi-order  $\le_\tau$.

Here is the connection between monoids of the form $M_\tau(W)$ and
$\tau$-terms for monoid varieties.

\begin{prop} \cite[Proposition~2.3]{Sapir-20+}  \label{P: MW}  Let $\tau$ be a congruence on the free monoid $\mathfrak A^*$ such that the empty word $1$  forms a singleton $\tau$-class.
Let $W \subseteq \mathfrak A^*$ be a set of words which is a union of  $\tau$-classes.  Let  ${\mathtt W}  \subseteq \mathfrak A^*/ \tau$ denote the set of all $\tau$-classes contained in $W$. Then for every monoid variety $\vv$  the following are equivalent:

(i)  $\vv$ contains $M_\tau(W) = M_\tau({\mathtt W})$;

(ii)  every word in $W^\le$ is  $\tau$-term for $\vv$;

(iii)  every $\tau$-class in ${\mathtt W}^{\le_\tau}$ is stable with respect to $\vv$.

\end{prop}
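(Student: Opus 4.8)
The plan is to prove the two ``elementary'' equivalences (ii)$\Leftrightarrow$(iii) by a class-by-class dictionary, and then to settle the substantive equivalence (i)$\Leftrightarrow$(ii) by computing inside the Rees quotient $M_\tau(W)$. Two preliminary facts about $W^\le$ drive everything. First, $W^\le$ is closed under taking subwords, by definition. Second, $W^\le$ is itself a union of $\tau$-classes: if ${\bf v}\in W^\le$, say ${\bf w}={\bf p}{\bf v}{\bf s}\in W$, and ${\bf v}'\,\tau\,{\bf v}$, then ${\bf p}{\bf v}'{\bf s}\,\tau\,{\bf w}$ since $\tau$ is a congruence, so ${\bf p}{\bf v}'{\bf s}\in W$ (as $W$ is a union of $\tau$-classes), whence ${\bf v}'\in W^\le$. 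Unwinding the definition of $\le_\tau$, a $\tau$-class ${\mathtt v}$ lies in ${\mathtt W}^{\le_\tau}$ exactly when its words are subwords of words of $W$, i.e. exactly when ${\mathtt v}\subseteq W^\le$; thus the union of the classes in ${\mathtt W}^{\le_\tau}$ is precisely $W^\le$. Consequently (ii) and (iii) speak of the same collection of $\tau$-classes, and for a single $\tau$-class the remark recorded with the definition of $\tau$-term — that a $\tau$-class is stable with respect to $\vv$ iff each of its words is a $\tau$-term for $\vv$ — delivers (ii)$\Leftrightarrow$(iii) immediately.

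For the equivalence with (i) I would first fix the explicit description of $M_\tau(W)$: its nonzero elements are the classes $[{\bf w}]_\tau$ with ${\bf w}\in W^\le$; a product $[{\bf a}]_\tau\,[{\bf b}]_\tau$ equals $[{\bf a}{\bf b}]_\tau$ if ${\bf a}{\bf b}\in W^\le$ and equals $0$ otherwise; and $0$ is absorbing. The hypothesis that $1$ forms a singleton $\tau$-class guarantees that $[1]_\tau$ is a genuine identity distinct from $0$. To prove (i)$\Rightarrow$(ii), take ${\bf u}\in W^\le$ and an identity ${\bf u}\approx{\bf v}$ of $\vv$, and apply the canonical projection $\pi\colon\mathfrak A^*\to M_\tau(W)$, $a\mapsto[a]_\tau$, viewed as the substitution sending each letter to its own class. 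Since $M_\tau(W)\in\vv$ it satisfies ${\bf u}\approx{\bf v}$, so $\pi({\bf u})=\pi({\bf v})$. Here $\pi({\bf u})=[{\bf u}]_\tau\ne0$ because ${\bf u}\in W^\le$; hence $\pi({\bf v})\ne0$, forcing ${\bf v}\in W^\le$ and $[{\bf u}]_\tau=[{\bf v}]_\tau$, that is ${\bf u}\,\tau\,{\bf v}$. Thus every word in $W^\le$ is a $\tau$-term.

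For (ii)$\Rightarrow$(i) I would check that $M_\tau(W)$ satisfies every identity ${\bf s}\approx{\bf t}$ of $\vv$. Given a substitution $\phi$ of the variables into $M_\tau(W)$, choose for each variable $x$ a representative word ${\bf w}_x$ lying in $W^\le$ when $\phi(x)\ne0$ and lying outside $W^\le$ when $\phi(x)=0$, and let $\Phi\colon\mathfrak A^*\to\mathfrak A^*$ be the resulting endomorphism. Because $W^\le$ is subword-closed, $\phi({\bf s})=[\Phi({\bf s})]_\tau$ when $\Phi({\bf s})\in W^\le$ and $\phi({\bf s})=0$ otherwise, and likewise for ${\bf t}$. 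Since $\vv$ is a variety, $\vv\models\Phi({\bf s})\approx\Phi({\bf t})$. Now invoke the key consequence of (ii): for $\vv$-equal words ${\bf p},{\bf q}$ one has ${\bf p}\in W^\le\Leftrightarrow{\bf q}\in W^\le$ (as $W^\le$ is a union of $\tau$-classes whose words are $\tau$-terms), and when both lie in $W^\le$ they are $\tau$-equivalent. Applied to $\Phi({\bf s}),\Phi({\bf t})$ this gives $\phi({\bf s})=\phi({\bf t})$ in every case, so $M_\tau(W)\in\vv$.

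The step I expect to require the most care is the bookkeeping around the zero of the Rees quotient. One must verify that an arbitrary substitution into $M_\tau(W)$ lifts to an honest endomorphism $\Phi$ of $\mathfrak A^*$ for which evaluation commutes with projection, and in particular that variables sent to $0$ are represented by words \emph{outside} the subword-closed set $W^\le$, so that a product containing such a factor lands outside $W^\le$ and both sides of an identity collapse to $0$ together. This is precisely where subword-closedness of $W^\le$ and the singleton hypothesis on $1$ are needed; once these are in place the remaining verifications are purely formal.
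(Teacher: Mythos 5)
Your proof is correct: the identification of ${\mathtt W}^{\le_\tau}$ with the set of $\tau$-classes contained in $W^\le$, the observation that $W^\le$ is a union of $\tau$-classes, the evaluation of the projection $\mathfrak A^*\to M_\tau(W)$ for (i)$\Rightarrow$(ii), and the lifting of substitutions through representatives (with the zero handled via subword-closedness of $W^\le$) for (ii)$\Rightarrow$(i) are all sound. The paper itself does not prove this proposition but imports it from \cite[Proposition~2.3]{Sapir-20+}, so there is no internal proof to compare against; your argument is the standard one that the cited source follows.
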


If $\tau$ is the trivial congruence on $\mathfrak A^*$ then we simply write
$M(W)$ instead of $M_\tau(W)$.
Proposition \ref{P: MW} generalizes  Lemma 3.3 in \cite{Jackson-05} which gives a connection between monoids of the form $M(W)$ and isoterms for monoid varieties.

For each set of words $W$ we use  $\mathbb M_\tau(W)$ to denote
the monoid variety generated by $M_\tau(W)$.

\begin{ex} \label{E: E2}

 For each $n \ge 1$,

(i) $\mathbb E^1 \{\sigma_{n+1}\} = \mathbb M_\beta([{\bf v}_n]_{\beta})$, in particular, $\mathbb E^1 \{\sigma_2\} = \mathbb M_\beta([at b^2 a]_{\beta})$;

(ii)  a monoid variety $\vv$ contains $E^1 \{\sigma_{n+1}\}$ if and only if  $\beta$-class $[{\bf v}_n]_{\beta}$ is stable
with respect to $\vv$.

\end{ex}

\begin{proof} (i) The argument used in the proof of Lemma~5.7 in \cite{Jackson-Lee}, in essence,
establishes that the set $[{\bf v}_n]_{\beta}$  is stable with respect to $\mathbb E^1 \{\sigma_{n+1}\}$ for each $n \ge 1$.
Since  every word in $([{\bf v}_n]_{\beta})^\le$ is $\beta$-term for  $\mathbb E^1 \{\sigma_{n+1}\}$ by Lemma~\ref{L: un}, we have $\mathbb E^1 \{\sigma_{n+1}\} \supseteq \mathbb M_\beta([{\bf v}_n]_{\beta})$ by  Proposition \ref{P: MW}.

According to Proposition~5.6   in  \cite{Jackson-Lee}, for each $n \ge 1$, the variety $\mathbb E^1\{\sigma_n\}$ is a
unique  maximal subvariety of  $\mathbb E^1 \{\sigma_{n+1}\}$. Since $\mathbb E^1 \{\sigma_n\}$ satisfies  $ {\bf u}_n \approx {\bf v}_n$,
the word  ${\bf v}_n$ is not $\beta$-term for  $\mathbb E^1 \{\sigma_n\}$. In view of Proposition \ref{P: MW}, the variety  $\mathbb E^1 \{\sigma_n\}$ does not contain  $M_\beta([{\bf v}_n]_{\beta})$. Therefore,  $\mathbb E^1 \{\sigma_{n+1}\} = \mathbb M_\beta([{\bf v}_n]_{\beta})$.

(ii)  If   $[{\bf v}_n]_{\beta}$ is stable
with respect to $\vv$, then every word in $([{\bf v}_n]_{\beta})^\le$ is $\beta$-term for $\vv$ by Lemma~\ref{L: un}.
Hence $\vv$ contains  $M_\beta ([{\bf v}_n]_{\beta})$ by Proposition \ref{P: MW}.
Consequently, $\vv$ contains $\mathbb E^1 \{\sigma_{n+1}\}  = \mathbb M_\beta([{\bf v}_n]_{\beta})$ by Part (i).  Conversely, if $\vv$ contains $\mathbb E^1 \{\sigma_{n+1}\}  = \mathbb M_\beta([{\bf v}_n]_{\beta})$ then  $[{\bf v}_n]_{\beta}$ is stable with respect to $\vv$ by  Proposition \ref{P: MW}.
\end{proof}

\begin{fact} \label{F: FB} \cite[Corollary~3.6]{GS}
A monoid variety is FB whenever it satisfies one of the following:

(i) $\{xtx \approx xtx^2, xy^2 tx \approx xy^2xtx\}$;

(ii) $\{xtx \approx x^2tx, xty^2 x \approx xtxy^2x\}$.

\end{fact}

\begin{lemma}\label{L: not E} Let $\vv$ be a monoid variety that satisfies  $xtx \approx x^2tx \approx xtx^2$ and contains neither  $\mathbb E^1\{\sigma_2\}$ nor $\overline{\mathbb A^1}$.
Then  $\vv$ is FB.
\end{lemma}

\begin{proof}
Consider two cases.

{\bf Case 1:} The set $x^+tx^+$ is not stable with respect to $\vv$.

If $xy$ is not an isoterm for $\vv$ then $\vv$ is either commutative or idempotent (see Lemma~2.6 in \cite{GS}, for instance) and consequently,
is FB \cite{Head-68, Wismath-86}.
If $xy$ is an isoterm for $\vv$ then $\vv \models xtx \approx x^2t$ or  $\vv \models xtx \approx tx^2$. Consequently, $\vv$ satisfies either\\ $xy^2tx \approx xy^2xtx$ or $xty^2x \approx xtxy^2x$ and is  FB by Fact~\ref{F: FB}.

{\bf Case 2:}  The set $x^+tx^+$ is  stable with respect to $\vv$.

 Since $\vv$ does not contain $\mathbb E^1\{\sigma_2\}$, the $\beta$-class  $[at b^2 a]_{\beta}$
is not stable with respect to $\vv$ by Example~\ref{E: E2}.
This means that $\vv \models {\bf u}  \approx {\bf v}$ such that ${\bf u} \in  [at b^2 a]_{\beta}$  but ${\bf v} \not\in  [at b^2 a]_{\beta}$.
Since $\vv \models (yx)^2 \approx (yx)^3$, we may assume that \[{\bf u} \in \{xty^2x, xtyxy, xt (yx)^2, xt (yx)^2y\};\]
\[{\bf v} \in \{xtxy^2, xtxy^2x, xt (xy)^2, xt (xy)^2x\}.\]

Since $\vv$ does not contain  $\overline{\mathbb A^1}$, the dual of Lemma~4.5 in \cite{GS} implies that $\vv \models xty^2x  \approx xt (yx)^2$.
Hence we may assume that \[{\bf u} \in \{xty^2x, xtyxy \}.\]
Since $xty^2x  \approx xt (yx)^2$ implies $xy^2x  \approx x(yx)^2$, we may assume that
\[{\bf v} \in \{xtxy^2, xtxy^2x, xt (xy)^2\}.\]
Then ${\bf u}  \approx {\bf v}$ together with $xtx \approx x^2tx \approx xtx^2$
implies one of the following identities:

{\bf Subcase 2.1:}  $xty^2x \approx xt xy^2x$ or $xty^2x \approx xt (xy)^2x$.

Notice that  $xty^2x \approx xt (xy)^2x$ together with $xy^2x  \approx x(yx)^2$ implies  $xty^2x \approx xt xy^2x$.
Therefore,  $\vv$ is FB by Fact~\ref{F: FB}(ii).

{\bf Subcase 2.2:}  $xtyxy \approx xt xy^2$ or $xtyxy \approx xt (xy)^2$.

Each of these identities together with  $xy^2x  \approx x(yx)^2$    implies $xt (yx)^2 \approx x txy^2x$.

Therefore,  $\vv \models xty^2x \approx xt (yx)^2 \approx xtxy^2x$ and is FB by Fact~\ref{F: FB}(ii).
\end{proof}

\begin{theorem} \label{T: main} ${\mathbb A^1} \vee  {\mathbb E^1}\{\sigma_2\}$ and  $\overline{\mathbb A^1} \vee \overline{ \mathbb E^1\{\sigma_2\}}$ are new limit varieties of monoids.

\end{theorem}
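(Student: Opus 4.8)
The plan is to treat the two varieties together by duality and to reduce the whole argument to Corollary~\ref{C: AE}, Lemma~\ref{L: not E}, and its dual. Write $\mathbb W=\mathbb A^1\vee\mathbb E^1\{\sigma_2\}$. Since $\overline{\mathbb A^1}\vee\overline{\mathbb E^1\{\sigma_2\}}=\overline{\mathbb W}$ and being a limit variety is a self-dual property (FB-ness is dual-invariant, and the dual of a proper subvariety of $\mathbb W$ is a proper subvariety of $\overline{\mathbb W}$), it suffices to prove that $\mathbb W$ is a limit variety. That $\mathbb W$ is NFB is immediate from Corollary~\ref{C: AE} applied to $\vv=\mathbb W$, which contains itself and is contained in $\mathbb A^1\vee\mathbb E^1$. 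So the entire content of the theorem is that every proper subvariety of $\mathbb W$ is FB.

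First I would record two facts about $\mathbb W$. Each of $\mathbb A^1$ and $\mathbb E^1\{\sigma_2\}$ satisfies the central identities $xtx\approx x^2tx\approx xtx^2$ (for $\mathbb A^1$ this is a direct check on $A^1$, for $\mathbb E^1\{\sigma_2\}$ it is part of its basis), hence so does $\mathbb W$. The crux is to produce a single identity $\phi\colon xty^2x\approx xtxy^2x$ that holds in $\mathbb W$ but in neither $\overline{\mathbb A^1}$ nor $\overline{\mathbb E^1\{\sigma_2\}}$. One checks directly that $A^1\models\phi$, and $\mathbb E^1\{\sigma_2\}\models\phi$ because $xty^2x\,\beta\,xtxy^2x$ (conditions (i)--(iii) are readily verified); thus $\mathbb W\models\phi$. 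For the two non-containments, note that the word $ab^2ata$ lies outside both $[ab^2ta]_{\tau_1\wedge\gamma}=a^+bb^+ta^+$ and $[ab^2ta]_\beta$, while $\mathbb A^1$ keeps $a^+bb^+ta^+$ stable (as used in the proof of Corollary~\ref{C: AE}) and $\mathbb E^1\{\sigma_2\}$ keeps $[ab^2ta]_\beta$ stable (by Example~\ref{E: E2} together with the proof of Corollary~\ref{C: abt}); hence neither variety satisfies the dual identity $xy^2tx\approx xy^2xtx$, so neither $\overline{\mathbb A^1}$ nor $\overline{\mathbb E^1\{\sigma_2\}}$ satisfies $\phi$. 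In particular $\mathbb W$ contains neither $\overline{\mathbb A^1}$ nor $\overline{\mathbb E^1\{\sigma_2\}}$.

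The main step is then a clean dichotomy. Let $\vv$ be a proper subvariety of $\mathbb W$. It inherits the central identities and, being contained in $\mathbb W$, contains neither $\overline{\mathbb A^1}$ nor $\overline{\mathbb E^1\{\sigma_2\}}$. Since $\vv\ne\mathbb A^1\vee\mathbb E^1\{\sigma_2\}$, at least one joinand fails to be contained in $\vv$. If $\mathbb E^1\{\sigma_2\}\not\subseteq\vv$, then $\vv$ satisfies the central identities and contains neither $\mathbb E^1\{\sigma_2\}$ nor $\overline{\mathbb A^1}$, so $\vv$ is FB by Lemma~\ref{L: not E}. If instead $\mathbb A^1\not\subseteq\vv$, then $\vv$ contains neither $\mathbb A^1$ nor $\overline{\mathbb E^1\{\sigma_2\}}$, so $\vv$ is FB by the dual of Lemma~\ref{L: not E}. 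Either way $\vv$ is FB, so $\mathbb W$ is a limit variety, and $\overline{\mathbb W}$ is one by duality.

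Finally, to see these limit varieties are new I would observe that $\mathbb W\models\phi$ while $\mathbb W\not\models xy^2tx\approx xy^2xtx$ (the latter already fails in $\mathbb A^1$), so $\mathbb W$ is not self-dual; this separates $\mathbb W$ from $\overline{\mathbb W}$ and from the self-dual known limit varieties $\mathbb A^1\vee\overline{\mathbb A^1}$ and $\mathbb E^1\{\sigma_2\}\vee\overline{\mathbb E^1\{\sigma_2\}}\vee\mathbb A_0^1$. Moreover $A^1\in\mathbb W$ has non-commuting idempotents ($ab=0\ne ba$), so $\mathbb W$ is not a variety of monoids with commuting idempotents and therefore differs from $\mathbb L,\mathbb M,\mathbb J,\overline{\mathbb J},\mathbb K,\overline{\mathbb K}$. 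I expect the real work to be concentrated in the second paragraph: finding the one identity $\phi$ that simultaneously excludes both $\overline{\mathbb A^1}$ and $\overline{\mathbb E^1\{\sigma_2\}}$, and verifying $A^1\models\phi$ along with the three (non-)stability facts. Once $\phi$ is available, the dichotomy reduces everything to Lemma~\ref{L: not E} and its dual.
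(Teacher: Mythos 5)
There is a genuine gap, and it sits exactly where you predicted the real work would be: the identity $\phi\colon xty^2x\approx xtxy^2x$ does \emph{not} hold in $\mathbb W=\mathbb A^1\vee\mathbb E^1\{\sigma_2\}$. Your parenthetical claim that $xty^2x\,\beta\,xtxy^2x$ is false: condition (iii) in the definition of $\beta$ compares the order of \emph{first} occurrences of the multiple letters inside corresponding blocks, and in the block following $t$ this order is $y$-before-$x$ in $y^2x$ but $x$-before-$y$ in $xy^2x$, so the two words lie in different $\beta$-classes. Worse, the stability of $[atb^2a]_\beta$ with respect to $\mathbb E^1\{\sigma_2\}$ (Example~\ref{E: E2}) says precisely that $\mathbb E^1\{\sigma_2\}$ \emph{refutes} $\phi$; indeed, Lemma~\ref{L: not E} derives $xty^2x\approx xtxy^2x$ from the hypothesis that $\vv$ does \emph{not} contain $\mathbb E^1\{\sigma_2\}$. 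So $\phi$ cannot serve as the separating identity, and with it collapse both non-containments you need ($\overline{\mathbb A^1}\not\subseteq\vv$ in Case~1, $\overline{\mathbb E^1\{\sigma_2\}}\not\subseteq\vv$ in Case~2) as well as the concluding ``newness'' discussion. The first paragraph (NFB via Corollary~\ref{C: AE}, reduction to $\mathbb W$ by duality) and the shape of the dichotomy are fine.

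The identity the paper uses instead is $xtysxy\approx xtysyx$, i.e.\ \eqref{xtysxy}: there both $x$ and $y$ already occur in blocks preceding the final one, so the reordering is of $\sigma_\infty$-type and holds in $\mathbb E^1\{\sigma_2\}$ (and in $\mathbb A^1$), while it fails in $\overline{\mathbb A^1}=\mathbb M([atb^2a]_{\tau_1\wedge\gamma})$ by Proposition~\ref{P: MW}. With \eqref{xtysxy} in place of $\phi$, your Case~1 goes through verbatim. In Case~2 ($\mathbb A^1\not\subseteq\vv$) the paper does not invoke the dual of Lemma~\ref{L: not E} at all, thereby avoiding the need to prove $\overline{\mathbb E^1\{\sigma_2\}}\not\subseteq\mathbb W$: it derives $xy^2tx\approx xyxytx$ from Lemma~4.5 of \cite{GS} and then $xyxytx\approx xyyxtx$ from \eqref{xtysxy}, which is exactly the hypothesis of Corollary~3.6 of \cite{GS}. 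If you insist on the symmetric route through the dual of Lemma~\ref{L: not E}, you would owe a separate proof that $\overline{\mathbb E^1\{\sigma_2\}}\not\subseteq\mathbb W$, which your argument does not currently supply.
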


\begin{proof} The variety  ${\mathbb A^1} \vee  {\mathbb E^1}\{\sigma_2\}$ is NFB by Corollary~\ref{C: AE}.

According to \cite[Theorem~4.3(iii)]{Sapir-20+} we have  $\mathbb A^1 = \mathbb M_{\tau_1 \wedge \gamma} ([ab^2t a]_{\tau_1 \wedge \gamma})$ and $\overline{\mathbb A^1} = \mathbb M_{\tau_1 \wedge \gamma}([atb^2 a]_{\tau_1 \wedge \gamma})$. Therefore, the following identity holds on $\mathbb A^1$ but fails on $\overline{\mathbb A^1}$:
\begin{equation}\label{xtysxy}
xtysxy \approx xtysyx.
\end{equation}
Since  ${\mathbb E^1}\{\sigma_2\}$ satisfies  \eqref{xtysxy} by the definition, the variety  ${\mathbb A^1} \vee  {\mathbb E^1}\{\sigma_2\}$
satisfies  \eqref{xtysxy}.

Let $\vv$ be a proper subvariety of   ${\mathbb A^1} \vee  {\mathbb E^1}\{\sigma_2\}$.
If $\vv$ does not contain   ${\mathbb E^1}\{\sigma_2\}$ then $\vv$ is FB by  Lemma~\ref{L: not E}.
So, we may assume that $\vv$ does not contain $\mathbb A^1$.
Then 
\[\vv \models xy^2tx \stackrel{ \cite[Lemma~4.5]{GS}} {\approx}  xyxytx \stackrel{ \eqref{xtysxy}}{\approx} xyyxtx.\]
Hence $\vv$ is FB by Fact~\ref{F: FB}(i).

 The variety $\overline{\mathbb A^1} \vee \overline{ \mathbb E^1\{\sigma_2\}}$ is also  limit  by dual arguments.
\end{proof}


\section{Syntactic monoids $M_{synt} (W)$ (semigroups $S_{synt} (W)$) and monoids of the form $M_\tau(W)$ (semigroups of the form $S_\tau(W)$)}  \label{sec: MW}

 Recall that given a set of words (language) $W \subseteq \mathfrak A^+$,  the syntactic congruence or Myhill  congruence $\sim_W$  on the free monoid  $\mathfrak A^*$ (resp. free semigroup  $\mathfrak A^+$) is defined by
${\bf u} \sim_W {\bf v}$ if and only if for any ${\bf p}, {\bf s} \in \mathfrak A^*$ we have ${\bf p u s} \in W  \Leftrightarrow {\bf p v s} \in W$.  It is well-known and can be easily verified that the syntactic congruence $\sim_W$ is the largest congruence on $\mathfrak A^*$ (resp. $\mathfrak A^+$) for which $W$ is a union of congruence classes. 
The quotient $\mathfrak A^*/ \sim_W$  ($\mathfrak A^+/ \sim_W$)  is called the {\em syntactic monoid} (resp. {\em syntactic semigroup}) of $W$ and denoted by  $M_{synt} (W)$ ( resp. $S_{synt} (W)$) 
 (see \cite{JA}, for instance).

Let $\tau$ be  a congruence  on the free semigroup $\mathfrak A^+$  and   $W \subseteq \mathfrak A^+$ be a union of $\tau$-classes.
We extend $\tau$ to the free monoid $\mathfrak A^*$ by adding $\{(1,1)\}$ to it and define  $S_\tau(W) = M_\tau(W) \setminus \{1\}$.
The following proposition is similar to Proposition~\ref{P: MW} above.

\begin{prop}  \label{P: SW}  Let $\tau$ be a congruence on  the free semigroup $\mathfrak A^+$ and
 $W \subseteq \mathfrak A^+$ be a set of words  which  is a union of  $\tau$-classes.  Let  ${\mathtt W}  \subseteq \mathfrak A^+/ \tau$ denote the set of all $\tau$-classes contained in $W$. Then for every semigroup variety $\vv$  the following are equivalent:

(i)  $\vv$ contains $S_\tau(W) = S_\tau({\mathtt W})$;

(ii)  every word in $W^\le$ is a $\tau$-term for $\vv$;

(iii)  every $\tau$-class in ${\mathtt W}^{\le_\tau}$ is stable with respect to $\vv$.

\end{prop}

\begin{proof}
The equivalence of (i) and (ii) follows Lemma~7.1 in \cite{Sapir-18} and its proof.
The equivalence of  (ii) and (iii) follows from Lemma~2.1 in \cite{Sapir-20+}.
\end{proof}

\begin{obs} \label{O: W2}
(i)  For any congruence $\tau$ on $\mathfrak A^*$  (resp. on $\mathfrak A^+$) and for any
set of words $W \subseteq \mathfrak A^+$ such that $W$ is a union of $\tau$-classes, the syntactic
monoid  $M_{synt} (W)$ (resp. 
semigroup  $S_{synt} (W)$) is a homomorphic image of  $M_\tau(W)$  (resp. $S_\tau(W)$).

(ii) \cite{Jackson-01} The monoids $M(\{{\bf w}\})$ (resp. semigroups $S(\{{\bf w}\})$) and  $M_{synt} (\{{\bf w}\})$ (resp. $S_{synt}(\{{\bf w}\})$) are isomorphic.
\end{obs}

\begin{proof} (i) Since the syntactic congruence $\sim_W$ is larger than $\tau$, the  monoid  $M_{synt} (W) = \mathfrak A^*/ \sim_W$ (resp. semigroup $S_{synt} (W) = \mathfrak A^+/ \sim_W$)
 is a homomorphic image of   $\mathfrak A^*/ \tau$ (resp.  $\mathfrak A^+/ \tau$). The rest follows from the fact that  if  $\mathfrak A^* \setminus W^\le$ is not empty
 then it forms a single $\sim_W$-class.

(ii) If $W$ consists of a single word ${\bf w} \in \mathfrak A^+$ then the syntactic congruence $\sim_W$ is diagonal on $W^\le$. 
\end{proof}

Given a set of words $W$ we
 use $\mathbb M_{synt}(W)$ (resp. $\mathbb S_{synt}(W)$)    to denote the monoid (resp. semigroup)  variety generated by  $M_{synt}(W)$ (resp. $S_{synt}(W)$).
Let $\mathbb S_\tau(W)$ denote the semigroup variety generated by $S_\tau(W)$.

\begin{theorem}  \label{T: SM}  Let $\tau$ be a congruence on the free semigroup  $\mathfrak A^+$.  Let $W \subseteq \mathfrak A^+$ be a set of words such that:

(i)  $W$ forms a single
 $\tau$-class, that is, $W =  [{\bf w}]_\tau$ for some ${\bf w} \in \mathfrak A^+$;

(ii)  if  $W$ is stable with respect to a monoid (resp. semigroup) variety $\vv$,  then every word in $W^\le$ is $\tau$-term for $\vv$.

Then  $\mathbb M_\tau (W) = \mathbb M_{synt} ( W)$ (resp. $\mathbb S_\tau (W) = \mathbb S_{synt} ( W)$).

\end{theorem}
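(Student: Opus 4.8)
The plan is to prove the two inclusions $\mathbb M_{synt}(W) \subseteq \mathbb M_\tau(W)$ and $\mathbb M_\tau(W) \subseteq \mathbb M_{synt}(W)$ separately; the first is immediate and the second is where the hypotheses do their work. For the first inclusion I would invoke Observation~\ref{O: W2}: since $W = [{\bf w}]_\tau$ is in particular a union of $\tau$-classes, that observation gives that $M_{synt}(W)$ is a homomorphic image of $M_\tau(W)$. As a variety is closed under homomorphic images, $M_{synt}(W) \in \mathbb M_\tau(W)$, and hence $\mathbb M_{synt}(W) \subseteq \mathbb M_\tau(W)$.

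For the reverse inclusion I would reduce, via Proposition~\ref{P: MW} (the implication (ii)$\Rightarrow$(i), whose hypothesis on $1$ is exactly the one assumed here), to showing that every word in $W^\le$ is a $\tau$-term for $\vv := \mathbb M_{synt}(W)$; this would yield $M_\tau(W) \in \vv$ and hence $\mathbb M_\tau(W) \subseteq \mathbb M_{synt}(W)$. By hypothesis (ii), this reduces further to the single claim that $W$ itself is stable with respect to $\vv$. This stability claim is the one substantive step.

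To prove it, I would take an identity ${\bf u} \approx {\bf v}$ satisfied by $\vv$ with ${\bf u} \in W$ and argue ${\bf v} \in W$. Since $M_{synt}(W) = \mathfrak A^*/ \sim_W$ lies in $\vv$, the identity holds in $M_{synt}(W)$, and in particular under the canonical projection $\pi \colon \mathfrak A^* \to \mathfrak A^*/ \sim_W$ sending each word to its own $\sim_W$-class. This gives $[{\bf u}]_{\sim_W} = \pi({\bf u}) = \pi({\bf v}) = [{\bf v}]_{\sim_W}$, i.e. ${\bf u} \sim_W {\bf v}$. Finally, taking ${\bf p} = {\bf s} = 1$ in the definition of $\sim_W$ shows that $W$ is a union of $\sim_W$-classes, so ${\bf u} \in W$ forces ${\bf v} \in W$, as required.

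Combining the two inclusions proves the equality. I expect the main (indeed the only) obstacle to be the stability verification in the third paragraph; the rest is bookkeeping through Observation~\ref{O: W2} and Proposition~\ref{P: MW}. It is worth flagging that hypothesis (i) is used only to make Observation~\ref{O: W2} applicable, while hypothesis (ii) is precisely the device that upgrades stability of the single set $W$ to the term condition feeding Proposition~\ref{P: MW}; without (ii) the passage from ``$W$ stable'' to ``every word in $W^\le$ is a $\tau$-term'' could fail.
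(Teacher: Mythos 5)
Your proposal is correct and follows essentially the same route as the paper: the inclusion $\mathbb M_{synt}(W)\subseteq\mathbb M_\tau(W)$ via Observation~\ref{O: W2}, and the reverse inclusion by showing that any identity ${\bf u}\approx{\bf v}$ of $M_{synt}(W)$ forces ${\bf u}\sim_W{\bf v}$, hence that $W$ is stable with respect to $\mathbb M_{synt}(W)$, and then feeding hypothesis (ii) into Proposition~\ref{P: MW}. The only cosmetic difference is that the paper phrases the key step as ``every word in $W$ is a $\tau$-term'' (using that $W$ is a single $\tau$-class) while you phrase it directly as stability of $W$; these are equivalent here.
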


\begin{proof}
 If $M_{synt} (W)$ (resp. $S_{synt} (W)$)  satisfies an identity ${\bf u} \approx {\bf v}$ then  ${\bf u} \sim_W {\bf v}$.
Hence  ${\bf v} \in W$  whenever   ${\bf u} \in W$. Since $W$ is a $\tau$-class we have  ${\bf u} \tau {\bf v}$ whenever   ${\bf u} \in W$. Consequently, every word in $W$ is a $\tau$-term for $M_{synt} (W)$ (resp. $S_{synt} (W)$).
This implies that $W$ is stable with respect to the monoid variety $\mathbb M_{synt} (W)$ (resp. semigroup variety $\mathbb S_{synt} (W)$).  Then  every word in $W^\le$ is a $\tau$-term for  $\mathbb M_{synt} (W)$ (resp.  $\mathbb S_{synt} (W)$) by our assumption.
Using Proposition~\ref{P: MW}   (resp. Proposition~\ref{P: SW})  we obtain  $\mathbb M_\tau(W) \subseteq \mathbb M_{synt} (W)$ ($\mathbb S_\tau(W) \subseteq \mathbb S_{synt} (W)$).
 In view of Observation~\ref{O: W2}, we have $\mathbb M_\tau(W) = \mathbb M_{synt} (W)$  ($\mathbb S_\tau(W) = \mathbb S_{synt} (W)$).
\end{proof}

 {Let $\sim_2$ be the 2-testable congruence on ${\mathfrak A}^*$ defined by

$\bullet$
 ${\bf u} \sim_2 {\bf v}$ if and only if  ${\bf u}$ and  ${\bf v}$ begin and end with the same letter(s) and
 share the same set of subwords of length two.}

 Let $A_2$ and $B_2$ be two 0-simple semigroups given by presentations:
\[A_2 = \langle a, b \mid  a^2 = aba = a, b^2 = 0, bab = b \rangle = \{a, b, ab, ba, 0\};\]
\[B_2 = \langle a, b \mid  a^2 = b^2 = 0, aba = a, bab = b \rangle = \{a, b, ab, ba, 0\}.\]

Let $W_A$ be the set of all words in $\{x,y\}^+$ which do not contain $y^2$ and $W_B$ be the set of all words in $\{x,y\}^+$ which contain neither $x^2$ nor $y^2$.

\begin{ex} Let $\mathbb B_2$ and $\mathbb A_2$ denote the  varieties generated by semigroups $B_2$ and $A_2$ respectively. Then

(i) $\mathbb B_2 = \mathbb S_{\sim_2} (W_B) =   \mathbb S_{\sim_2}([xyx]_{\sim_2})  =  \mathbb S_{synt}(x(yx)^+)$;

(ii) $\mathbb A_2 = \mathbb S_{\sim_2} (W_A) = \mathbb S _{synt}([xyx^2]_{\sim_2})$.

\end{ex}

\begin{proof}(i) 
 It is easy to check that every word in $W_B$ is a $\sim_2$-term for $\mathbb B_2$. Hence  $\mathbb B_2$ contains  $\mathbb S_{\sim_2} (W_B)$ by Proposition~\ref{P: SW}. On  
the other hand,  $S_{\sim_2} (W_B)$ contains the following subsemigroup isomorphic to $B_2$:
\[\{a = [xyx]_{\sim_2}, b = [yxy]_{\sim_2}, ab= [xyxy]_{\sim_2}, ba= [yxyx]_{\sim_2}, 0\}.\]
Therefore, $\mathbb B_2 = \mathbb S_{\sim_2} (W_B)$.  It is easy to see that $W_B^\le = W_B \cup \{1\} = ([xyx]_{\sim_2})^\le$. Thus the semigroups  $S_{\sim_2}(W_B)$ and  $S_{\sim_2}([xyx]_{\sim_2})$ coincide by the definition. 

To verify Condition (ii) in Theorem~\ref{T: SM}, suppose that the set  $[xyx]_{\sim_2} = x(yx)^+$
is stable with respect to a semigroup variety $\vv$.   
 Take some ${\bf v} \in W_B$ and suppose that $\vv \models {\bf v} \approx {\bf w}$
 for some ${\bf w}$. If $\bf w$ contains either $x^2$ or $y^2$ then multiplying  the identity ${\bf v} \approx {\bf w}$ by $x$ on the left or (and) on the right
if necessary, we obtain that  $\vv \models {\bf u} \approx {\bf w'}$ such that ${\bf u} \in x(yx)^+$ but ${\bf w'} \not \in x(yx)^+$. Since this contradicts the fact that
the set $x(yx)^+$ is stable with respect to $\vv$, we must assume that $\bf w$ contains neither $x^2$ nor $y^2$. In a similar way, one can argue that $\bf v$ and $\bf w$
must begin and end with the same letters. Hence every word in $W_B$ must be a  ($\sim_2$)-term for $\vv$. Consequently, $\mathbb S_{\sim_2} ([xyx]_{\sim_2}) = \mathbb S_{synt} ([xyx]_{\sim_2})$
by Theorem~\ref{T: SM}.

(ii) \[\mathbb A_2 \stackrel{*}{=} \mathbb S_{\sim_2} (\mathfrak A^+) \supseteq \mathbb S_{\sim_2}(W_A) \stackrel{**}{=} \mathbb S_{\sim_2}([xyx^2]_{\sim_2}) \stackrel{Theorem~\ref{T: SM}}{=} \mathbb S _{synt}([xyx^2]_{\sim_2}).\]
By the result of Trahtman \cite{Trahtman-81, Trahtman-99},  the relation $\sim_2$ is the fully invariant congruence of  $\mathbb A_2$. Hence $S_{\sim_2} (\mathfrak A^+) = \mathfrak A^+/ \sim_2$ is the relatively free semigroup in $\mathbb A_2$. This establishes
the equality $\stackrel{*}{=}$  above. 
It is easy to see that $W_A^\le = W_A \cup \{1\} =  ([xyx^2]_{\sim_2})^\le$. Thus the semigroups  $S_{\sim_2}(W_A)$ and  $S_{\sim_2}([xyx^2]_{\sim_2})$ coincide by the definition. This establishes
the equality $\stackrel{**}{=}$  above. 
Notice that  the following $5$-elements subset of  $S_{\sim_2}(W_A)$ forms a subsemigroup of  $S_{\sim_2}(W_A)$ isomorphic to $A_2$:
\[\{a = [xyx^2]_{\sim_2}, b = [yx^2y]_{\sim_2}, ab= [x^2yxy]_{\sim_2}, ba= [yxyx^2]_{\sim_2}, 0\}.\]
Hence we have  $\mathbb S_{\sim_2}(W_A) \supseteq \mathbb A_2$ and consequently,  $\mathbb A_2 = \mathbb S_{\sim_2}(W_A) $. Similar arguments
as in the proof of Part (i) show that the semigroups  $S_{\sim_2}([xyx^2]_{\sim_2})$ and  $S_{synt}([xyx^2]_{\sim_2})$ are equationally equivalent 
 by Theorem~\ref{T: SM}.
\end{proof}

\begin{ex} \label{E: subs of E}  Let ${\bf v}_1,  {\bf v}_2, {\bf v}_3, \dots$ be the words defined recursively in \eqref{unvn}.
Then for each $n \ge 1$,  \[\mathbb E^1 \{\sigma_{n+1}\} \stackrel {Example~\ref{E: E2}} {=} \mathbb M_{\beta}  ([{\bf v}_{n}]_\beta) = \mathbb M_{synt}  ([{\bf v}_{n}]_\beta).\]

\end{ex}

\begin{proof}
 $\mathbb M_{\beta}  ([{\bf v}_{n}]_\beta) = \mathbb M_{synt}  ([{\bf v}_{n}]_\beta)$ by  Lemma~\ref{L: un} and Theorem~\ref{T: SM}.
\end{proof}

Let  $L_2$ denote the left-zero semigroup of order two:
\[
L_2 = \langle e, f, \mid e^2 =ef = e,  f^2 = fe=f \rangle=\{e,f\}.
\]

Let  $B_0$ be the semigroup given by presentation:
\[B_0 = \langle e, f, c \mid  e^2=e, f^2 =f, ef=fe=0, ec=cf=c  \rangle,\]
The five-element monoid $B_0^1$ was introduced and shown to be FB in  \cite{Edmunds-77}.

\begin{figure}[htb]
\unitlength=1mm
\linethickness{0.4pt}
\begin{center}
\begin{picture}(55,130)
\put(35,5){\circle*{1.33}}
\put(35,15){\circle*{1.33}}
\put(35,25){\circle*{1.33}}
\put(35,35){\circle*{1.33}}
\put(45,45){\circle*{1.33}}
\put(25,45){\circle*{1.33}}
\put(35,55){\circle*{1.33}}
\put(15,55){\circle*{1.33}}

\put(15,65){\circle*{1.33}}

\put(55,65){\circle*{1.33}}

\put(55,75){\circle*{1.33}}

\put(15,120){\line(0,1){15}}

\put(15, 135){\circle*{1.33}}

\put(15, 125){\circle*{1.33}}

 \put(15,35){\circle*{1.33}}

 \put(15,75){\circle*{1.33}}

\put(15,35){\line(0,1){65}}

\put(35,5){\line(0,1){30}}
\put(35,35){\line(-1,1){20}}
\put(35,35){\line(1,1){10}}
\put(25,45){\line(1,1){10}}

\put(45,45){\line(-1,1){10}}

\put(35,15){\line(-1,1){20}}

\put(35,55){\line(0,1){10}}

\put(35,65){\circle*{1.33}}

\put(35,65){\line(-2,1){20}}

\put(55,85){\circle*{1.33}}

\put(35,65){\line(2,1){20}}

\put(55,65){\line(0,2){20}}

\put(15,105){\circle*{.5}}

\put(15,110){\circle*{.5}}

\put(15,115){\circle*{.5}}

\put(15,95){\circle*{1.33}}

\put(15,85){\circle*{1.33}}

\put(35,55){\line(-2,1){20}}

\put(35,55){\line(2,1){20}}

\put(35,2){\makebox(0,0)[cc]{${\mathbb M}(\varnothing)$}}
\put(37,15){\makebox(0,0)[lc]{${\mathbb M}(1)$}}
\put(37,25){\makebox(0,0)[lc]{${\mathbb M}(x)$}}
\put(37,35){\makebox(0,0)[lc]{${\mathbb M}(xy)$}}
\put(24,45){\makebox(0,0)[rc]{${\mathbb M}_{synt}(ta^+)$}}
\put(14,55){\makebox(0,0)[rc]{${\mathbb M}_{synt}([a^2b^2]_\beta)$}}
\put(71,54){\makebox(0,0)[rc]{$\mathbb B_0^1 = {\mathbb M}_{synt}(a^+ta^+)$}}

\put(14,65){\makebox(0,0)[rc]{${\mathbb M}_{\beta}([a^2b^2]_\beta)$}}

\put(14, 125){\makebox(0,0)[rc]{$\mathbb E^1\{\sigma_\infty\} = {\mathbb M}_\beta(W_\infty)$}}

\put(14, 75){\makebox(0,0)[rc]{$\mathbb E^1\{\sigma_1\}$}}

\put(14, 85){\makebox(0,0)[rc]{$\mathbb E^1\{\sigma_2\} = \mathbb M_{synt}([atb^2a]_\beta)$}}

\put(14, 95){\makebox(0,0)[rc]{$\mathbb E^1\{\sigma_3\} = \mathbb M_{synt}([bsatba]_\beta)$}}

\put(14,35){\makebox(0,0)[rc]{$\mathbb L_2^1 = \mathbb M_{synt} (a \{a,b\}^*)$}}

\put(46,45){\makebox(0,0)[lc]{${\mathbb M_{synt}(a^+t)}$}}
\put(18, 69){\makebox(0,0)[lc]{$ \mathbb Q^1 = {\mathbb M}_{synt}(a^+tsa^+)$}}

\put(56,65){\makebox(0,0)[lc]{$ \mathbb A_0^1 = {\mathbb M}_{synt}(a^+b^+)$}}

\put(56,85){\makebox(0,0)[lc]{$ \mathbb A^1 = {\mathbb M}_{synt}(a^+b^+ta^+)$}}

\put(5, 140){\makebox(0,0)[lc]{${\mathbb E^1} = {\mathbb M}_{synt}([abtab]_\beta)$}}
\end{picture}
\end{center}
\caption{Subvariety lattices of $\mathbb E^1$ (cf. Fig.~4 in \cite{Jackson-Lee}) and of $\mathbb A^1$ (cf. Fig. 1 in \cite{Zhang-Luo})}
\label{pic}
\end{figure}

 In contrast with Example~\ref{E: subs of E}, the next example together with Figure 4 in \cite{Jackson-Lee} implies that
for ${\bf v}_0 = b^2a^2$, the three varieties $\mathbb E^1 \{\sigma_1\}$, $\mathbb M_{\beta}  ( [{\bf v}_0]_\beta)$ and
$\mathbb M_{synt}  ( [{\bf v}_0]_\beta)$ are pairwise distinct (see Figure~\ref{pic}).

\begin{ex} \label{E: LZ}
  \[ (i) \hskip.1in  \mathbb L^1_2 \vee  \mathbb M(x) =  \mathbb M_{synt}  ( [a^2b^2]_\beta).\]

 \[(ii) \hskip.1in \mathbb L^1_2 \vee  \mathbb B^1_0  = \mathbb M_{\beta}  ( [a^2b^2]_\beta).\]

\end{ex}

\begin{proof} 
Let $\alpha$ denote the fully invariant congruence of  $\mathbb L^1_2 = \var\{xy \approx xyx\}$. It is well known and easily verified that for any  ${\bf u}, {\bf v} \in \mathfrak A^*$ we have:

${\bf u}\mathrel{\alpha}{\bf v}$ if and only if  $\con({\bf u}) =  \con({\bf v})$ and
$(_{1{\bf u}} x) <_{\bf u} {(_{1{\bf u}}y)} \Leftrightarrow (_{1{\bf v}} y) <_{\bf v} {(_{1{\bf v}}x)}$ for any $x, y \in \con({\bf u})$.

(i)  
Notice that
$[a^2b^2]_{\alpha \wedge \gamma} = [a^2b^2]_\beta$ is the set of all words in $\{a, b\}^*$ which begin with $a$ and where both $a$ and $b$ are multiple. It is easy to see that if this set
 is stable with respect to a monoid variety $\vv$ then every word in
$([a^2b^2]_{\alpha \wedge \gamma})^\le = \{a, b\}^*$ is $(\alpha \wedge \gamma)$-term for $\vv$. Therefore,
 $\mathbb M_{\alpha \wedge \gamma}  ( [a^2b^2]_{\alpha \wedge \gamma}) =  \mathbb M_{synt}  ( [a^2b^2]_{\alpha \wedge \gamma})$  by Theorem~\ref{T: SM}.
Consequently, $\mathbb M_{\alpha  \wedge \gamma} (\{a,b\}^*) = \mathbb M_{synt}  ( [a^2b^2]_\beta)$.

Since  $\alpha$ is the fully invariant congruence of  $\mathbb L_2^1$ and $\gamma$ is the  fully invariant congruence of  $\mathbb M(x)$,
their intersection  $(\alpha \wedge \gamma)$ is the fully invariant congruence of  $\mathbb L^1_2 \vee  \mathbb M(x)$ and $M_{\alpha  \wedge \gamma}(\mathfrak A^*)$ is the relatively free monoid in  $\mathbb L^1_2 \vee  \mathbb M(x)$.  Therefore, we have
\[\mathbb L^1_2 \vee  \mathbb M(x) = \mathbb M_{\alpha  \wedge \gamma}(\mathfrak A^*) \supseteq \mathbb M_{\alpha  \wedge \gamma} (\{a, b \}^*) \supseteq \mathbb L^1_2 \vee  \mathbb M(x),\]
because $L_2^1$ is isomorphic to the submonoid  $\{1, [a^2b^2]_\beta, [b^2a^2]_\beta\}$  of  $\mathbb M_{\alpha  \wedge \gamma} ( \{a,b\}^*)$ and $x$ is an isoterm for   $\mathbb M_{\alpha  \wedge \gamma} ( \{a,b\}^*)$.
Hence $\mathbb L^1_2 \vee  \mathbb M(x) = \mathbb M_{\alpha \wedge \gamma}  (\{a, b\}^*)$. Overall we have $\mathbb L^1_2 \vee  \mathbb M(x) =  \mathbb M_{synt}  ( [a^2b^2]_\beta)$.

(ii) According to Figure 4 in \cite{Jackson-Lee}, the variety $\mathbb L^1_2 \vee  \mathbb B^1_0$ contains $\mathbb L^1_2 \vee  \mathbb M(x)$.
Hence every word in  $[a^2b^2]_{\alpha \wedge \gamma} = [a^2b^2]_\beta$ is an $(\alpha \wedge \gamma)$-term for $\mathbb L^1_2 \vee  \mathbb B^1_0$ by Part (i) and Proposition~\ref{P: MW}. Consequently, every word in  $[a^2b^2]_{\alpha \wedge \gamma} = [a^2b^2]_\beta$ is a $\beta$-term for $\mathbb L^1_2 \vee  \mathbb B^1_0$. If ${\bf u} \not \in [a^2b^2]_\beta$ but is a subword of a word in $[a^2b^2]_\beta$, then one letter in $\bf u$ is simple. So, modulo renaming letters, ${\bf u} = a^m b a^k$ for some $m, k \ge 0$. According to Theorem~4.1(iii) in \cite{Sapir-20+},
we have $\mathbb B^1_0 =  \mathbb M_{(\tau_1 \wedge \gamma)}(a^+ta^+)$. Together with Proposition~\ref{P: MW}, this implies that $\bf u$ is
$(\tau_1 \wedge \gamma)$-term for $\mathbb L^1_2 \vee  \mathbb B^1_0$. Hence $\bf u$ is $(\alpha \wedge \gamma)$-term for $\mathbb L^1_2 \vee  \mathbb B^1_0$. In view of Proposition~\ref{P: MW}, we have $\mathbb L^1_2 \vee  \mathbb B^1_0  \supseteq \mathbb M_{\beta}  ( [a^2b^2]_\beta)$.

According to Figure 4 in \cite{Jackson-Lee}, the variety $\mathbb L^1_2 \vee  \mathbb B^1_0$ has two dual covers $\mathbb L^1_2 \vee  \mathbb M(x)$ and $\mathbb B^1_0$. Since $\mathbb L^1_2 \vee  \mathbb M(x) \models xtx \approx x^2t$ and $\mathbb B^1_0 \models x^2y^2 \approx y^2x^2$,
Proposition~\ref{P: MW} implies that neither $\mathbb L^1_2 \vee  \mathbb M(x)$ nor $\mathbb B^1_0$ contains $\mathbb M_{\beta}  ( [a^2b^2]_\beta)$. Therefore, $\mathbb L^1_2 \vee  \mathbb B^1_0 = \mathbb M_{\beta}  ( [a^2b^2]_\beta)$.
\end{proof}

Figure~\ref{pic}  combines Figure~4 in \cite{Jackson-Lee} and Figure~1 in \cite{Zhang-Luo}. 
Figure~1 in \cite{Sapir-20+} duplicates the lattice of subvarieties of $\mathbb A^1$ from \cite{Zhang-Luo}, where
each variety $\vv$  is labeled by a monoid of the form $M_{(\tau_1 \wedge \gamma)}(W)$ which generates $\vv$.
When $W$ is a single $(\tau_1 \wedge \gamma)$-class, we use Theorem~\ref{T: SM} together with Lemma~\ref{L: gammasub} to replace the  $M_{(\tau_1 \wedge \gamma)}(W)$  generator of $\vv$ 
by the corresponding syntactic monoid $M_{synt}(W)$. If a variety $\vv$ on Figure~\ref{pic} is a join of two varieties then $\vv$ can be generated by two syntactic monoids.
 For example, if $\vv$ is a non-J-trivial subvariety of  $\mathbb E^1 \{\sigma_{1}\}$ then $\vv$ is the join of $\mathbb L_2^1$ with a subvariety of $\mathbb A^1$.
It is easy to see that the three-element monoid $L^1_2$ is isomorphic to $M_{synt} (a \{a,b\}^*)$, where $a \{a,b\}^*$ is the set of all words in
$\{a,b\}^*$ which begin with $a$.

For each $n \ge 1$ we have  $\mathbb E^1 \{\sigma_{n+1}\} =  \mathbb M_{synt}  ([{\bf v}_{n}]_\beta)$ by Example~\ref{E: subs of E}.
Since $\beta$ is the fully invariant congruence of $\mathbb E^1$, every word is $\beta$-term for $\mathbb E^1$. In particular, every word in  $([abtab]_\beta)^\le$ is $\beta$-term for $\mathbb E^1$.
Hence $\mathbb E^1$ contains  $\mathbb M_\beta([abtab]_\beta)$ by Proposition~\ref{P: MW}. Since $\mathbb E^1\{\sigma_\infty\}$ is a unique maximal subvariety of  $\mathbb E^1$ which satisfies
$xytxy \approx xytyx$ \cite{Jackson-Lee}, we have $\mathbb E^1= \mathbb M_\beta([abtab]_\beta)$. Now suppose that
$[abtab]_\beta$ is stable with respect to a monoid variety $\vv$. 
Since every word in $([abtab]_\beta)^\le$ is either almost-block-simple or belongs to $[abtab]_\beta$,
every word in $([abtab]_\beta)^\le$ is $\beta$-term for $\vv$ by Lemma~\ref{L: un}. Consequently,  $\mathbb E^1= \mathbb M_\beta([abtab]_\beta) = \mathbb M_{synt}([abtab]_\beta)$
by Theorem~\ref{T: SM}.

Overall, using Theorem~\ref{T: SM}, for every variety on Figure~\ref{pic} other than $\mathbb E^1\{\sigma_\infty\} \stackrel{\cite{Jackson-Lee}}{=} \mathbb E\{xytxy \approx xytyx\}$,
one can readily identify one or two syntactic monoids which generate it.
In view of Proposition~5.11 in \cite{Jackson-Lee}, $\mathbb E^1\{\sigma_\infty\}$ is
the only non-finitely generated variety on Figure~\ref{pic}. It is easy to verify that $\mathbb E^1\{\sigma_\infty\} = \mathbb M_\beta(W_\infty)$, where
$W_\infty$ is the set of all almost block-simple words.


\section{Syntactic monoids generating  limit varieties} \label{sec: 14}

Let $\zeta$ be the fully invariant  congruence of $\var \{xtxs \approx xtxsx\}$.  Given $\bf w \in  \mathfrak A^\ast$ we use $\ini_2({\bf w})$ to denote the word obtained by retaining the first two occurrences of each letter in $\bf w$.
 It is easy to see that for every ${\bf u}, {\bf v} \in \mathfrak A^\ast$ we have:
\[{\bf u} \zeta {\bf v} \Leftrightarrow  \ini_2({\bf u}) = \ini_2({\bf v}).\]
Let ${\overline{\zeta}}$ denote the congruence dual to $\zeta$.

The goal of this section is to show that the ten limit varieties mentioned in the introduction can be assembled  from a single `lego set',
which contains ten types of pieces: six words $\{abtbsa, atbsba, atbasb, atb^2a, ab^2ta, ab\}$ and four congruences $\{\tau_1, \gamma, \beta, \zeta\}$. Notice that each of the four congruences $\{\tau_1, \gamma, \beta, \zeta\}$ is defined by a simple formula. We collect the formulas for the syntactic  monoids which generate these varieties in Table~\ref{classes}.

The first two rows of Table~\ref{classes} contain syntactic monoids which generate limit varieties $\mathbb L$ and $\mathbb M$  from \cite{Jackson-05}.
As we mentioned in the introduction,   $\mathbb L$ is generated by $M(\{abtbsa, atbsba\})$  and $\mathbb M$ is generated by $M(\{atbasb\})$.
Observation \ref{O: W2} implies that  $M(\{atbasb\})$ is isomorphic to $M_{synt}(\{atbasb\})$. In view of Lemma~5.1 in \cite{Jackson-Sapir}, we have
$\mathbb L = \mathbb M_{synt}(\{abtbsa\}) \vee \mathbb M_{synt}(\{atbsba\})$.

\begin{fact}\cite[Lemma~6.3]{Sapir-20+} \label{F: two}
Suppose that every letter occurs at most twice in  $\bf u$. If $[{\bf u}]_{\tau_1 \wedge \zeta}$ is stable with respect to a monoid variety
$\vv$ then every word in $([{\bf u}]_{\tau_1 \wedge \zeta})^\le$ is  ($\tau_1 \wedge \zeta$)-term for $\vv$.

\end{fact}

The third row contains  syntactic monoid which generates the limit variety $\mathbb J$ from \cite{Gusev-SF}. According to Theorem~7.2 in \cite{Sapir-20+}, we have
$\mathbb J  = \mathbb M_{\tau_1 \wedge \zeta} ([atbasb]_{\tau_1 \wedge \zeta})$. Using Theorem~\ref{T: SM} together with Fact~\ref{F: two}, we obtain
\[\mathbb J  = \mathbb M_{synt} ([atbasb]_{\tau_1 \wedge \zeta}) = \mathbb M_{synt}  (atba^+sb^+).\]
Dually, we have
\[\overline{\mathbb J} =  \mathbb M_{\tau_1 \wedge \zeta} ([atbasb]_{\tau_1 \wedge \overline{\zeta}}) = \mathbb M_{synt}  (a^+tb^+asb).\]

The limit variety $\mathbb K$ in the fourth row of Table~\ref{classes} was introduced in \cite{GS} as the variety generated by $M_{\tau_1 \wedge \zeta} ([atb^2a]_{\tau_1 \wedge \zeta})$.
Using Theorem~\ref{T: SM} together with Fact~\ref{F: two}, we obtain
\[\mathbb K  = \mathbb M_{synt} ([atb^2a]_{\tau_1 \wedge \zeta}) = \mathbb M_{synt}  (atbb^+a^+).\] 
Dually, we have
\[\overline{\mathbb K} =  \mathbb M_{\tau_1 \wedge \overline{\zeta}} ([a b^2t a]_{\tau_1 \wedge \overline{\zeta}}) = \mathbb M_{synt}  (a^+b^+bt a).\]
Using Fact~\ref{F: two},  one can verify that the variety
$\mathbb K$ can be also generated  by \\ $M_{\tau_1 \wedge \zeta} ([atbsba]_{\tau_1 \wedge \zeta}) = M_{\tau_1 \wedge \zeta} (atbsb^+a^+) $. Hence, we also have 
\[\mathbb K  = \mathbb M_{synt} (atbsb^+a^+),   \overline{\mathbb K} =  \mathbb M_{synt}(a^+b^+tbsa).\]

The fifth row contains two syntactic monoids which generate the limit variety $\overline{\mathbb A }^1 \vee \mathbb A^1$ from \cite{Zhang-Luo}.
Indeed, according to Theorem~4.3 in \cite{Sapir-20+}, the variety  $\mathbb A^1$ is generated by  $M_{\tau_1 \wedge \gamma}([ab^2t a]_{\tau_1 \wedge \gamma}) =M_{\tau_1 \wedge \gamma}(a^+b^+t a^+)$
and $\overline{\mathbb A }^1$ is generated by  $M_{\tau_1 \wedge \gamma}([atb^2 a]_{\tau_1 \wedge \gamma}) =M_{\tau_1 \wedge \gamma}(a^+t b^+a^+)$.
Using Theorem~\ref{T: SM} together with Lemma~\ref{L: gammasub}, we obtain 
\[\overline{\mathbb A^1} \vee \mathbb A ^1 = \mathbb M_{synt} (a^+t bb^+a^+) \vee \mathbb M_{synt} (a^+bb^+t a^+).\]

The sixth row in Tables~\ref{classes} contains two syntactic monoids which generate the limit variety
$\overline{\mathbb A^1} \vee \overline{ \mathbb E^1\{\sigma_2\}}$ from Theorem~\ref{T: main} above.
In view of the previous paragraph and the dual of Example~\ref{E: subs of E}, we have
\[\overline{\mathbb A^1} \vee \overline{ \mathbb E^1\{\sigma_2\}} =   \mathbb M_{synt} (a^+t bb^+a^+) \vee   \mathbb M_{synt}  ([ab^2 t a]_{\overline{\beta}}).\]
Dually, we have
\[ {\mathbb A^1} \vee  {\mathbb E^1}\{\sigma_2\}    =   \mathbb M_{synt} (a^+bb^+ta^+) \vee   \mathbb M_{synt}  ([a tb^2a]_\beta).\]

Finally, the seventh row  contains three syntactic monoids which generate the limit variety 
${\mathbb E}^1(\sigma_2) \vee \overline{{\mathbb E}^1(\sigma_2)} \vee \mathbb A_0^1$ from \cite{GS-23}.
According to Sect.~7 in \cite{Sapir-18}, the monoid $A_0^1$ is isomorphic to $M_{\tau_1}([ab]_{\tau_1})  =  M_{\tau_1} (a^+b^+).$
Using Theorem~\ref{T: SM} together with Lemma~\ref{L: gammasub}, we obtain $\mathbb A_0^1 = \mathbb M_{synt} (a^+b^+)$.  Example~\ref{E: subs of E} and its dual imply that
\[{\mathbb E}^1(\sigma_2) \vee \overline{{\mathbb E}^1(\sigma_2)} \vee \mathbb A_0^1 =  \mathbb M_{synt}([at b^2 a]_{\beta}) \vee  \mathbb M_{synt}([a b^2t  a]_{\overline{\beta}}) \vee
\mathbb M_{synt}(a^+b^+).\]

In contrast with the ten limit varieties whose generating syntactic monoids up to duality are listed in the first seven rows of Table~\ref{classes}, it seems that monoids of the form $M_\tau(W)$ are not useful for describing the (finitely generated) limit varieties $\mathbb J_1$, $\mathbb J_2$, $\overline{\mathbb J_1}$ and $\overline{\mathbb J_2}$  found in \cite{GLZ}.
For instance, Lemma~5.3  in \cite{GLZ} implies that $\mathbb J_1 =
\mathbb M_\delta( [asbtabzzs]_\delta)$, for some congruence $\delta$ such that $[asbtabzzs]_\delta = [asbtabzzs]_{\zeta} \cup [asbtabzsz]_{\zeta}$. The formula for the congruence $\delta$ can be extracted from the proof of Proposition~5.4  in \cite{GLZ}
and is very technical.

\begin{table}[tbh]
\begin{center}
\small
\begin{tabular}{|l|l|l|}
\hline limit variety of monoids generated by&   found by   \\
\hline
\protect\rule{0pt}{10pt}    $M_{synt} (\{abtbsa\}) \times M_{synt} ( \{atbsba\})$  & Jackson in \cite{Jackson-05}\\
\hline
\protect\rule{0pt}{10pt}    $M_{synt} (\{atbasb\})$  &  Jackson  in \cite{Jackson-05} \\
\hline
\protect\rule{0pt}{10pt}   $M_{synt}  ([atbasb]_{\tau_1 \wedge \zeta})$  & Gusev in \cite{Gusev-SF} \\
\hline
\protect\rule{0pt}{10pt}    $M_{synt}  ([atb^2a]_{\tau_1 \wedge \zeta})$ or by $M_{synt}  ([atbsba]_{\tau_1 \wedge \zeta})    $   & Gusev-Sapir in \cite{GS} \\
\hline
\protect\rule{0pt}{10pt}   $M_{synt}  ([atb^2a]_{\tau_1 \wedge \gamma}) \times  M_{synt}  ([ab^2ta]_{\tau_1 \wedge \gamma})$
 & Zhang-Luo in \cite{Zhang-Luo} \\
\hline
\protect\rule{0pt}{10pt}  $M_{synt}  ([at b^2a]_{\tau_1 \wedge \gamma}) \times M_{synt}  ([ab^2 t a]_{\overline{\beta}})$   & Theorem~\ref{T: main}  \\
\hline
\protect\rule{0pt}{10pt}  $M_{synt}  ([at b^2 a]_\beta) \times M_{synt}  ([ab^2 t a]_{\overline{\beta}})\times M_{synt}( [a b]_{\tau_1})$   &  Gusev-Sapir in \cite{GS-23} \\
\hline

\protect\rule{0pt}{10pt} Contains $M_{synt}( [atbsba]_\zeta ) $ & Gusev-Li-Zhang in \cite{GLZ} \\
\hline
\protect\rule{0pt}{10pt}
Contains $M_{synt}( [atb^2a]_\zeta ) $   &  Gusev-Li-Zhang in \cite{GLZ} \\
\hline
\end{tabular}
\caption{14  limit varieties of aperiodic monoids
\protect\rule{0pt}
{11pt}}
\label{classes}
\end{center}
\end{table}


\section*{Acknowledgments}
The author is grateful to Sergey Gusev for many discussions and for reading  several versions of this paper with lots of valuable comments.

\small

\end{document}